\title[Cut-off for compact quantum groups]{Cut-off phenomenon for random walks on free orthogonal quantum groups}
\author{Amaury Freslon}
\keywords{Cut-off phenomenon, random walk, quantum groups}
\subjclass[2010]{46L53, 60J05 20G42}
\address{A. Freslon, Laboratoire de Math\'ematiques d'Orsay, Univ. Paris-Sud, CNRS, Universit\'e Paris-Saclay, 91405 Orsay, France}
\email{amaury.freslon@math.u-psud.fr}
\date{}
\theoremstyle{plain}
\newtheorem{thm}{Theorem}[section]
\newtheorem{prop}[thm]{Proposition}
\newtheorem{lem}[thm]{Lemma}
\theoremstyle{definition}
\newtheorem{de}[thm]{Definition}
\theoremstyle{remark}
\newtheorem{rem}[thm]{Remark}
\DeclareMathOperator{\ev}{ev}
\DeclareMathOperator{\id}{id}
\DeclareMathOperator{\Id}{Id}
\DeclareMathOperator{\Irr}{Irr}
\DeclareMathOperator{\Tr}{Tr}
\DeclareMathOperator{\var}{var}
\newcommand{\C}{\mathbb{C}}
\newcommand{\D}{\Delta}
\newcommand{\G}{\mathbb{G}}
\newcommand{\N}{\mathbb{N}}
\newcommand{\R}{\mathbb{R}}
\newcommand{\dd}{\mathrm{d}}
\renewcommand{\O}{\mathcal{O}}
\begin{document}

\begin{abstract}
We give bounds in total variation distance for random walks associated to pure central states on free orthogonal quantum groups. As a consequence, we prove that the analogue of the uniform plane Kac walk on this quantum group has a cut-off at $N\ln(N)/2(1-\cos(\theta))$. This is the first result of this type for genuine compact quantum groups. We also obtain similar results for mixtures of rotations and quantum permutations.
\end{abstract}

\maketitle

\section{Introduction}

The study of random walks on groups has a long history and multiple connections to almost all areas of mathematics. It is therefore natural that from the early days of the theory of topological quantum groups, random walks on them were considered. The point of view was often that of discrete groups (the problem is interesting even for duals of classical compact Lie groups). In particular, the study of probabilistic boundaries has been the subject of several works and is still an active area of research. However, there is an aspect which has attracted no attention up to very recently, even though it is an important part of the subject for classical groups : the search for explicit estimates of convergence of random walks.

In the case of classical finite groups, the first important results for us are due to P. Diaconis and his coauthors in the eighties and reveal a surprising behaviour called the \emph{cut-off phenomenon} : for a number of steps, the total variation distance (see Subsection \ref{subsec:randomwalks} for the definition) between the random walk and the uniform distribution stays close to one and then it suddenly drops and converges exponentially to $0$. This triggered numerous works yielding more and more examples of cut-off in various settings, but also counter-examples so that the question of why and when this happens stays largely unanswered. In the quantum setting, the only results up to now are contained in the recent thesis of J.P. McCarthy \cite{maccarthy2017random} which studies convergence of random walks on finite quantum groups. There, the author gives explicit bounds for families of random walks on the Kac-Paljutkin and Sekine quantum groups, as well as on duals of symmetric groups. Unfortunately, the estimates are not tight enough to yield a complete cut-off statement for these examples.

In the present work, we turn to the case of infinite compact quantum groups. In particular, we will show that a specific random walk on the free orthogonal quantum groups $O_{N}^{+}$, coming from random rotations on $SO(N)$, has a cut-off with the same threshold as in the classical case, namely $N\ln(N)/2(1-\cos(\theta))$. This is the first complete cut-off result for a compact quantum group and the statement is all the more surprising that the computations involve mainly representation theory, which is very different for $SO(N)$ and $O_{N}^{+}$. Moreover, the representation theory of $O_{N}^{+}$ being in a sense simpler than that of $SO(N)$ we are able to give very precise statements for the bounds (not only up to some order) and the conditions under which they hold.

Using techniques from \cite{hough2017cut}, we can extend our result to random mixtures of rotations provided that the support of the measure governing the random choice of angle is bounded away from $0$. We also consider other examples involving the free symmetric quantum groups $S_{N}^{+}$. In that case, the previous techniques often prove useless. One way round the problem is to compare the corresponding transition operators, which are always well-defined. There is then several options for the choice of a norm and we give results for one of the simplest choices, namely the norm as operators on the $L^{2}$ space.

Let us conclude this introduction with an overview of the organization of this work. In Section \ref{sec:preliminaries} we give some preliminaries concerning compact quantum groups and random walks on them. We have tried to remain as elementary as possible so that the paper could be readable for people outside the field of quantum groups. In Section \ref{sec:orthogonal} we study central random walks associated to pure states on free orthogonal quantum groups and prove a kind of cut-off phenomenon in Theorem \ref{thm:estimategreaterthan2} : for a number of steps, the walk is not comparable in total variation distance with the Haar measure and as soon as it is, it converges exponentially. Using this, we show in Theorem \ref{thm:randomrotation} that the uniform plane Kac walk on $O_{N}^{+}$ has a cut-off with the same threshold as in the classical case. Eventually, we give in Section \ref{sec:further} other examples connected to free symmetric quantum groups and illustrate the analytical issue mentioned above.

\section{Preliminaries}\label{sec:preliminaries}

In this section we recall the basic notions concerning compact quantum groups and random walks on them. Since the abstract setting is not really needed to perform concrete computations, we will mainly set notations and give some fundamental results.

\subsection{Compact quantum groups}

Compact quantum groups are objects of noncommutative topological nature and therefore belong to the world of operator algebras. However, in the present work most things can be treated at an algebraic level which is slightly simpler to describe. We will therefore first give the main definitions in the setting of Hopf algebras and then briefly introduce in the end of this subsection the related analytical objects. We refer the reader to Parts I and II of \cite{timmermann2008invitation} for a detailed treatment of the algebraic theory of compact quantum groups and its link to the operator algebraic theory.

The basic example to keep in mind is of course that of a classical compact group $G$. In that case, the corresponding algebraic object is the complex algebra $\O(G)$ of \emph{regular functions}, i.e. coefficients of unitary representations. This is a Hopf algebra with an involution given by $f^{*}(g) = \overline{f(g)}$. Moreover, the Haar measure on $G$ yields by integration a linear form $h$ on $\O(G)$ which is positive ($h(a^{*}a) \geqslant 0$) and invariant under translation. Abstracting these properties leads to the following notion (with $\otimes$ denoting the algebraic tensor product over $\C$) :

\begin{de}
A compact quantum group $\G$ is given by a Hopf algebra $\O(\G)$ with an involution and a unital positive linear map $h : \O(\G)\to \C$ which is invariant in the sense that for all $a\in \O(\G)$,
\begin{equation*}
(h\otimes\id)\circ\D(a) = h(a).1 = (\id\otimes h)\circ\D(a),
\end{equation*}
where $\D : \O(\G)\to \O(\G)\otimes \O(\G)$ is the coproduct.
\end{de}

In the present work we will always assume that $\G$ is \emph{of Kac type}, meaning that for all $a, b\in \O(\G)$ $h(ab) = h(ba)$ (the Haar state is then said to be \emph{tracial}). Since the fundamental work of P. Diaconis and M. Shahshahani \cite{diaconis1981generating}, it is known that convergence of random walks can be controlled using representation theory and we will see that the same is true in the quantum setting. As for classical compact groups, the results of \cite{woronowicz1987compact} imply that any representation of a compact quantum group is equivalent to a direct sum of finite-dimensional unitary ones, so that we will only define the latter.

\begin{de}
A \emph{unitary representation of dimension $n$} of $\G$ is a unitary element $u\in M_{n}(\O(\G))$ such that for all $1\leqslant i, j\leqslant N$,
\begin{equation*}
\D(u_{ij}) = \sum_{k=1}^{n}u_{ik}\otimes u_{kj}.
\end{equation*}
A \emph{morphism} between representations $u$ and $v$ of dimension $n$ and $m$ respectively is a linear map $T : \C^{n}\rightarrow \C^{m}$ such that $(T\otimes \id)u = v(T\otimes \id)$. Two representations are said to be \emph{equivalent} if there is a bijective morphism between them. A representation $u$ is said to be \emph{irreducible} if the only morphisms between $u$ and itself are the scalar multiples of the identity.
\end{de}

We will denote by $\Irr(\G)$ the set of equivalence classes of irreducible representations of $\G$ and for each $\alpha\in \Irr(\G)$ we fix a representative $u^{\alpha}$ and denote by $d_{\alpha}$ its dimension (which does not depend on the chosen representative). It then follows that $\O(\G)$ is spanned by the coefficients $u^{\alpha}_{ij}$ of all the $u^{\alpha}$'s. Moreover, the Haar state induces an inner product on $\O(\G)$ for which the basis of coefficients is orthogonal. More precisely, it was proven in \cite{woronowicz1987compact} that for any $\alpha, \beta\in \Irr(\G)$ and $1\leqslant i, j\leqslant d_{\alpha}$, $1\leqslant k, l\leqslant d_{\beta}$,
\begin{equation*}
h(u^{\alpha}_{ij}u^{\beta\ast}_{kl}) = \delta_{\alpha, \beta}\frac{\delta_{i, k}\delta_{j, l}}{d_{\alpha}}.
\end{equation*}
The key object for computations with random walks is characters of irreducible representations. Let us therefore define these :

\begin{de}
The \emph{character} of a representation $u^{\alpha}$ of a compact quantum group $\G$ is defined as
\begin{equation*}
\chi_{\alpha} = \sum_{i=1}^{d_{\alpha}}u_{ii}^{\alpha}\in \O(\G).
\end{equation*}
Moreover, it only depends on $\alpha$ and not on the chosen representative.
\end{de}

We conclude this subsection with some analysis. As already mentioned, the bilinear map $(a, b)\mapsto h(b^{*}a)$ defines an inner product on $\O(\G)$ and the corresponding completion is a Hilbert space denoted by $L^{2}(\G)$. For any element of $\O(\G)$, left multiplication extends to a bounded operator on $L^{2}(\G)$, yielding an injective $*$-homomorphism $\O(\G)\to B(L^{2}(\G))$. The closure of the image of this map with respect to the weak operator topology is a von Neumann algebra denoted by $L^{\infty}(\G)$. We will also need the analogue of $L^{1}$ functions. For $a\in L^{\infty}(\G)$, set $\|a\|_{1} = h(\vert a\vert)$ where $\vert a\vert = \sqrt{a^{*}a}$ is defined through functional calculus. Then, $L^{1}(\G)$ is defined to be the completion of $L^{\infty}(\G)$ with respect to this norm.

\subsection{Random walks and central states}\label{subsec:randomwalks}

We will now introduce some material concerning random walks on compact quantum groups and the total variation distance. For finite quantum groups the subject has been treated in great detail by J.P. MacCarthy in \cite{maccarthy2017random}. The generalization to the compact case is not difficult so that this subsection will be rather expository. If $G$ is a compact group and $\mu$ is a measure on $G$, the associated random walk consists in picking elements of $G$ at random according to $\mu$ and then multiplying them. The probability of being in some measurable set after $k$ steps is then given by the $k$-th convolution power $\mu^{\ast k}$ of $\mu$, which can be expressed at the level of functions as
\begin{equation*}
\int_{G}f(g) \dd\mu^{\ast k}(g) = \int_{G^{k}}f(g_{k}\cdots g_{1})\dd\mu(g_{1})\cdots \dd\mu(g_{k}).
\end{equation*}
Studying the random walk associated to $\mu$ is therefore the same as studying the sequence of measures $(\mu^{\ast k})_{k\in \N}$.

Turning to quantum groups, first note that measures yield through integration linear forms on $\O(G)$. If the measure is moreover positive, then so is the linear form and if its total mass is $1$ then the linear form sends the unit of $\O(G)$ to $1$. Thus, probability measures yield \emph{states} in the following sense :

\begin{de}
A state on an involutive unital algebra $A$ is a linear form $\varphi : A\to \C$ such that $\varphi(1) = 1$ and $\varphi(a^{*}a) \geqslant 0$ for all $a\in A$.
\end{de}

A random walk on a compact quantum group $\G$ is therefore given by a state $\varphi$ on $\O(\G)$. The definition of convolution translates straightforwardly to this setting and one can for instance define $\varphi^{\ast k}$ by induction through the formula
\begin{equation*}
\varphi^{\ast (k+1)} = (\varphi\otimes \varphi^{\ast k})\circ\D = (\varphi^{\ast k}\otimes\varphi)\circ\D.
\end{equation*}

The key tool to estimate the rate of convergence of a random walk is a fundamental result of P. Diaconis and M. Sashahani \cite{diaconis1981generating} bounding the \emph{total variation distance} of the difference between a measure and the uniform one. Classically, if $\mu$ and $\nu$ are any two Borel probability measures on $G$, then
\begin{equation*}
\|\mu - \nu\|_{TV} =\sup_{E}\vert \mu(E) - \nu(E)\vert
\end{equation*}
where the supremum is over all Borel subsets $E\subset G$. This definition can be extended to quantum groups thanks to the fact that Borel subsets correspond to projections in the associated von Neumann algebra. This however requires that the states extend to $L^{\infty}(\G)$, which may not be the case (see for instance Lemma \ref{lem:boundednesscriterion}).

\begin{de}\label{de:totalvariation}
The total variation distance between two states $\varphi$ and $\psi$ on $L^{\infty}(\G)$ is defined by
\begin{equation*}
\|\varphi - \psi\|_{TV} = \sup_{p\in \mathcal{P}(L^{\infty}(\G))}\vert \varphi(p) - \psi(p)\vert,
\end{equation*}
where $\mathcal{P}(L^{\infty}(\G)) = \{p\in L^{\infty}(\G) \mid p^{2} = p = p^{*}\}$.
\end{de}

Assume now that we consider a state $\varphi$ which is \emph{absolutely continuous} with respect to the Haar state $h$, in the sense that there exists an element $a_{\varphi}\in L^{1}(\G)$ such that $\varphi(x) = h(a_{\varphi}x)$ for all $x\in \O(\G)$. Then, the total variation distance can be expressed in terms of $a_{\varphi}$. Note that the proof below uses the traciality of the Haar state.

\begin{lem}\label{lem:totalvariation}
If $\varphi$ is a state on $L^{\infty}(\G)$ with an $L^{1}$-density $a_{\varphi}\in L^{1}(\G)$, then
\begin{equation*}
\|\varphi - h\|_{TV} = \frac{1}{2}\|a_{\varphi} - 1\|_{1}.
\end{equation*}
\end{lem}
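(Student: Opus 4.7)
The plan is to reduce this to a spectral decomposition argument paralleling the classical proof for signed measures. Setting $b := a_{\varphi} - 1 \in L^{1}(\G)$, Definition \ref{de:totalvariation} gives immediately
\begin{equation*}
\|\varphi - h\|_{TV} = \sup_{p \in \mathcal{P}(L^{\infty}(\G))} |h(bp)|,
\end{equation*}
so the task is to identify this supremum with $\tfrac{1}{2}h(|b|)$.

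Before splitting $b$, I would record two preliminary facts. First, $b$ is self-adjoint: using that $\varphi(x^{*}) = \overline{\varphi(x)}$, that $h$ is tracial, and that $\overline{h(y)} = h(y^{*})$, one rewrites both sides of $\varphi(x^{*}) = \overline{\varphi(x)}$ as pairings against $a_{\varphi}$ and $a_{\varphi}^{*}$ respectively, and the faithfulness of the $L^{\infty}$--$L^{1}$ pairing then yields $a_{\varphi}^{*} = a_{\varphi}$. Second, $h(b) = \varphi(1) - h(1) = 0$.

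Applying Borel functional calculus to $b$, viewed as a self-adjoint operator affiliated to the von Neumann algebra $L^{\infty}(\G)$, one obtains a decomposition $b = b_{+} - b_{-}$ with $b_{\pm} \geqslant 0$, $b_{+}b_{-} = 0$ and $|b| = b_{+} + b_{-}$. Let $p_{+} \in L^{\infty}(\G)$ be the spectral projection of $b$ on $(0, \infty)$, so that $bp_{+} = b_{+}$. The condition $h(b) = 0$ forces $h(b_{+}) = h(b_{-}) = \tfrac{1}{2}\|b\|_{1}$, so taking $p = p_{+}$ already achieves the value $\tfrac{1}{2}\|b\|_{1}$ and gives the lower bound.

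For the matching upper bound, fix any projection $q$. Since $h$ is a positive trace, $h(b_{\pm}q) = h(q^{1/2}b_{\pm}q^{1/2}) \geqslant 0$ and similarly $h(b_{\pm}(1-q)) \geqslant 0$, so that $0 \leqslant h(b_{\pm}q) \leqslant h(b_{\pm})$. Combining with $h(b_{+}) = h(b_{-})$, this yields
\begin{equation*}
|h(bq)| = |h(b_{+}q) - h(b_{-}q)| \leqslant \max(h(b_{+}q), h(b_{-}q)) \leqslant h(b_{+}) = \tfrac{1}{2}\|b\|_{1}.
\end{equation*}
The only technicality to keep in mind is that $b$ is only in $L^{1}(\G)$, not in $L^{\infty}(\G)$, but since it is self-adjoint and affiliated with $L^{\infty}(\G)$ its spectral projections nevertheless lie in $L^{\infty}(\G)$ and the above manipulations are all legitimate.
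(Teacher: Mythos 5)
Your proof is correct and follows essentially the same route as the paper's: both identify the spectral projection $p_{+}$ of $b = a_{\varphi} - 1$ on $(0,\infty)$ as achieving the supremum, use $h(b)=0$ to get $h(b_{+}) = h(b_{-}) = \tfrac{1}{2}\|b\|_{1}$, and invoke traciality of $h$ together with positivity of $b_{\pm}$ to establish the upper bound over arbitrary projections. Your streamlined bound $0 \leqslant h(b_{\pm}q) = h(q^{1/2}b_{\pm}q^{1/2}) \leqslant h(b_{\pm})$ is a mild cosmetic simplification of the paper's step (which conjugates by $p_{+}$ instead of $q^{1/2}$), but it is not a genuinely different argument.
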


\begin{proof}
Let $\mathbf{1}_{\R_{+}}$ be the indicator function of the positive real numbers and define a projection $p_{+} = \mathbf{1}_{\R_{+}}(a_{\varphi} - 1)$ through functional calculus. We claim that the supremum in Definition \ref{de:totalvariation} is attained at $p_{+}$. Indeed, for any projection $q\in \mathcal{P}(L^{\infty}(\G))$, setting $b_{\varphi} = a_{\varphi} - 1$ we have
\begin{equation*}
\vert\varphi-h\vert(q) = \vert h(b_{\varphi}p_{+}q) + h(b_{\varphi}(1-p_{+})q)\vert \leqslant \max(h(b_{\varphi}p_{+}q), h(b_{\varphi}(p_{+}-1)q))
\end{equation*}
and observing that $p_{+}$ commutes with $b_{\varphi}$ we get
\begin{equation*}
\vert\varphi-h\vert(q) \leqslant \max(h(b_{\varphi}p_{+}qp_{+}), h(b_{\varphi}(p_{+}-1)q(p_{+}-1))) \leqslant \max(h(b_{\varphi}p_{+}), h(b_{\varphi}(p_{+}-1))).
\end{equation*}
We conclude using the fact that $h(b_{\varphi}) = (\varphi - h)(1) = 1$. Now since
\begin{equation*}
\vert b_{\varphi}\vert = \vert b_{\varphi}\vert p_{+} + \vert b_{\varphi}\vert(1-p_{+}) = 2b_{\varphi}p_{+} - b_{\varphi}
\end{equation*}
we get
\begin{equation*}
\|a_{\varphi} - 1\|_{1} = h(\vert b_{\varphi}\vert) = 2h(b_{\varphi}p_{+}) - h(b_{\varphi}) = 2(\varphi - h)(p_{+}) = 2\|\varphi - h\|_{TV}.
\end{equation*}
\end{proof}

This equality is the trick leading to the Diaconis-Shahshahani upper bound lemma which, in the end, does not involve $a_{\varphi}$ any more. To state this result, let us first introduce a notation : if $\varphi$ is a state and $\alpha\in \Irr(\G)$, we denote by $\widehat{\varphi}(\alpha)$ the matrix with coefficients $\varphi(u^{\alpha}_{ij})$ (this does not depend on the choice of a representative of $\alpha$). We can then consider $\widehat{\varphi}$ as an element of the $\ell^{\infty}$-sum of the matrix algebras $B(H_{\alpha})$, denoted by $\ell^{\infty}(\widehat{\G})$.

\begin{lem}[Upper bound lemma]\label{lem:upperbound}
Let $\G$ be a compact quantum group and let $\varphi$ be a state on $\G$ which is absolutely continuous with respect to the Haar state. Then,
\begin{equation*}
\|\varphi^{\ast k} - h\|_{TV}^{2} \leqslant \frac{1}{4}\sum_{\alpha\in \Irr(\G)\setminus\{\varepsilon\}}d_{\alpha}\Tr\left(\widehat{\varphi}(\alpha)^{* k}\widehat{\varphi}(\alpha)^{k}\right),
\end{equation*}
where $\varepsilon = 1\in M_{1}(\O(\G))$ denotes the trivial representation.
\end{lem}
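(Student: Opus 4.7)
The plan is to combine Lemma \ref{lem:totalvariation} with a Cauchy--Schwarz step to reduce to an $L^{2}$-norm of the density, and then compute the latter by a Parseval-type identity coming from the Peter--Weyl orthogonality relations. Applying Lemma \ref{lem:totalvariation} to $\varphi^{\ast k}$ gives $2\|\varphi^{\ast k} - h\|_{TV} = \|a_{\varphi^{\ast k}} - 1\|_{1}$, and since $h(1) = 1$ the Cauchy--Schwarz inequality yields $\|f\|_{1} = h(|f|) \leqslant h(|f|^{2})^{1/2} = \|f\|_{2}$. So it suffices to bound $\|a_{\varphi^{\ast k}} - 1\|_{2}^{2}$ by the right-hand side of the inequality.

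For this I would expand the density in the Peter--Weyl basis of $L^{2}(\G)$. Writing $a_{\varphi} = \sum_{\beta,k,l} c^{\beta}_{kl} u^{\beta\ast}_{kl}$ and using the defining relation $\varphi(x) = h(a_{\varphi}x)$ together with the orthogonality relations and the traciality of $h$, one identifies $c^{\alpha}_{ij} = d_{\alpha}\widehat{\varphi}(\alpha)_{ij}$, hence
\begin{equation*}
a_{\varphi} = \sum_{\alpha\in \Irr(\G)} d_{\alpha} \sum_{i,j=1}^{d_{\alpha}} \widehat{\varphi}(\alpha)_{ij}\, u^{\alpha\ast}_{ij}.
\end{equation*}
The trivial representation contributes exactly $1$, so $a_{\varphi} - 1$ is the same sum restricted to $\alpha \neq \varepsilon$. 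A second use of the orthogonality relations kills all off-diagonal terms in $h((a_{\varphi}-1)^{*}(a_{\varphi}-1))$ and produces the Parseval identity
\begin{equation*}
\|a_{\varphi} - 1\|_{2}^{2} = \sum_{\alpha\neq \varepsilon} d_{\alpha}\, \Tr\left(\widehat{\varphi}(\alpha)^{*}\widehat{\varphi}(\alpha)\right).
\end{equation*}

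Finally, I would show that $\widehat{\varphi^{\ast k}}(\alpha) = \widehat{\varphi}(\alpha)^{k}$. This is immediate from the coproduct formula $\D(u^{\alpha}_{ij}) = \sum_{m} u^{\alpha}_{im} \otimes u^{\alpha}_{mj}$, which gives $\widehat{\varphi \ast \psi}(\alpha) = \widehat{\varphi}(\alpha)\widehat{\psi}(\alpha)$; the claim then follows by induction. Substituting $\varphi^{\ast k}$ for $\varphi$ in the Parseval identity produces the stated upper bound. The algebraic core is clean, and I expect the only delicate point to be analytic: one should either verify that $a_{\varphi^{\ast k}}$ actually lies in $L^{2}(\G)$ so that the Parseval expansion converges, or observe that when this fails the right-hand side of the inequality is simply $+\infty$ and the bound is vacuous. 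Either way the stated inequality holds with no hypothesis beyond absolute continuity.
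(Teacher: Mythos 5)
Your proposal is correct and follows essentially the same route as the paper: reduce to the $L^{1}$-norm of the density via Lemma \ref{lem:totalvariation}, pass to the $L^{2}$-norm by Cauchy--Schwarz, and evaluate that $L^{2}$-norm by Plancherel together with the multiplicativity of the Fourier transform under convolution. The only difference is one of exposition -- you unpack the isometry of the Fourier transform into an explicit Peter--Weyl expansion and orthogonality computation, whereas the paper cites the isometry result directly -- and your closing remark about the case $a_{\varphi^{\ast k}}\notin L^{2}(\G)$ matches the remark the paper records after the lemma.
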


\begin{proof}
The proof for compact groups (assuming that the measure is central) was given in \cite[Lem 4.3]{rosenthal1994random} and the proof for finite quantum groups was given in \cite[Lem 5.3.8]{maccarthy2017random}. The argument here is the same so that we simply sketch it. The Cauchy-Schwartz inequality yields
\begin{equation*}
\|a_{\varphi} - 1\|_{1}^{2} = h(\vert a_{\varphi} - 1\vert)^{2} \leqslant h(1^{*}1)h\left((a_{\varphi}-1)^{*}(a_{\varphi}-1)\right) = \|a_{\varphi}-1\|_{2}^{2}
\end{equation*}
Moreover, the formula
\begin{equation*}
\widehat{h}(x) = \sum_{\alpha\in \Irr(\G)}d_{\alpha}\Tr(x)
\end{equation*}
defines a positive weight on $\ell^{\infty}(\widehat{\G})$. This is the analogue of the counting measure on a discrete group and one can define a Fourier transform $\mathcal{F} : L^{2}(\G)\rightarrow \ell^{2}(\G)$ (see for instance \cite[Sec 2]{podles1990quantum}) which is isometric. The conclusion now follows from the fact that the Fourier transform of $a_{\varphi}$ is $\widehat{\varphi}$ and the relationship between convolution and Fourier transform.
\end{proof}

\begin{rem}
Because of the Cauchy-Schwartz inequality, $L^{2}(\G)\subset L^{1}(\G)$ so that if $\varphi$ is not absolutely continuous with respect to $h$, then the right-hand side of the inequality is infinite and the inequality trivially holds.
\end{rem}

Our goal is therefore to bound $\sum d_{\alpha}\Tr(\varphi(\alpha)^{*k}\varphi(\alpha)^{k})$ by an explicit function of $k$. This requires the computation of the trace of arbitrary powers of matrices which can be very complicated. As already observed in \cite{rosenthal1994random}, things get more tractable when the measure is assumed to be central, i.e. invariant under the adjoint action, since then the Fourier transform of its density consists in scalar multiples of identity matrices. The same is true in the quantum setting, thanks to \cite[Prop 6.9]{cipriani2012symmetries} which we recall here for convenience.

\begin{prop}
Let $\G$ be a compact quantum group and let $\varphi : \O(\G)\to \C$ be a state. Then, $\varphi$ is invariant under the adjoint action if and only if for any irreducible representation $\alpha\in \Irr(\G)$, there exists $\varphi(\alpha) \in \C$ such that $\varphi(u_{ij}^{\alpha}) = \varphi(\alpha)\delta_{ij}$.
\end{prop}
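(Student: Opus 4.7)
The plan is to reduce the statement to Schur's lemma applied to the Fourier matrix $\widehat{\varphi}(\alpha)$. First I would write out the adjoint coaction on a matrix coefficient explicitly: starting from the iterated coproduct $(\D\otimes\id)\D(u^{\alpha}_{ij}) = \sum_{k,l} u^{\alpha}_{ik}\otimes u^{\alpha}_{kl}\otimes u^{\alpha}_{lj}$, one gets
\begin{equation*}
\mathrm{ad}(u^{\alpha}_{ij}) = \sum_{k,l=1}^{d_{\alpha}} u^{\alpha}_{kl}\otimes S(u^{\alpha}_{ik})u^{\alpha}_{lj},
\end{equation*}
so that applying $\varphi$ to the first leg translates invariance into the identity
\begin{equation*}
\sum_{k,l} A_{kl}\, S(u^{\alpha}_{ik})u^{\alpha}_{lj} = A_{ij}\cdot 1,
\end{equation*}
where $A=\widehat{\varphi}(\alpha)$ has entries $A_{kl} = \varphi(u^{\alpha}_{kl})$.

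The crucial step is then to multiply both sides by $u^{\alpha}_{mi}$ on the left and sum over $i$. The antipode identity $\sum_i u^{\alpha}_{mi}S(u^{\alpha}_{ik}) = \delta_{m,k}\cdot 1$ collapses the left-hand side to $\sum_l A_{ml} u^{\alpha}_{lj}$, yielding the matrix identity $Au^{\alpha} = u^{\alpha}A$ inside $M_{d_{\alpha}}(\O(\G))$. This says exactly that $A$, viewed as a scalar matrix, intertwines the irreducible representation $u^{\alpha}$ with itself, and Schur's lemma forces $A = \varphi(\alpha)\,I$ for some scalar $\varphi(\alpha)\in\C$.

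For the converse, I would just substitute $A_{kl} = \varphi(\alpha)\delta_{k,l}$ into the invariance equation and invoke the companion antipode identity $\sum_k S(u^{\alpha}_{ik}) u^{\alpha}_{kj} = \delta_{i,j}\cdot 1$, which immediately gives $(\varphi\otimes\id)\mathrm{ad}(u^{\alpha}_{ij}) = \varphi(\alpha)\delta_{i,j}\cdot 1 = \varphi(u^{\alpha}_{ij})\cdot 1$; this extends to all of $\O(\G)$ by linearity since matrix coefficients span.

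The main difficulty is really only bookkeeping: aligning the conventions for the adjoint coaction, the Fourier matrix, and the two antipode identities so that the commutant relation $u^{\alpha}A = Au^{\alpha}$ emerges cleanly. Once that picture is in hand, irreducibility of $u^{\alpha}$ closes the argument in one stroke, with no quantum-specific subtlety beyond the Kac-type identity for $S$.
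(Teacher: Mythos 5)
Your proof is correct, but note that the paper itself does not prove this proposition: it simply recalls it from \cite[Prop 6.9]{cipriani2012symmetries} and moves on. So what you have produced is a self-contained argument that the paper chose to outsource. The argument you give is the standard and arguably the shortest one: write the adjoint coaction $\mathrm{ad}(a) = a_{(2)}\otimes S(a_{(1)})a_{(3)}$ on a matrix coefficient, apply $\varphi$ to the first leg, use $\sum_i u^{\alpha}_{mi}S(u^{\alpha}_{ik}) = \delta_{mk}\cdot 1$ to turn the invariance relation into the intertwiner equation $(A\otimes 1)u^{\alpha} = u^{\alpha}(A\otimes 1)$ with $A = \widehat{\varphi}(\alpha)$, and invoke Schur's lemma. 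The converse by substitution and the companion identity $\sum_k S(u^{\alpha}_{ik})u^{\alpha}_{kj} = \delta_{ij}\cdot 1$ is equally clean, and linearity over the Peter--Weyl basis finishes it. It is also worth observing that your argument never uses positivity of $\varphi$, so it in fact characterizes all \emph{central linear functionals}, which is the generality one needs when manipulating differences like $\varphi - h$.

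One small correction to your closing remark: the two antipode identities you invoke, $m\circ(\id\otimes S)\circ\D = \eta\circ\varepsilon = m\circ(S\otimes\id)\circ\D$, are part of the axioms of any Hopf algebra and have nothing to do with the Kac condition. Where the Kac assumption (equivalently $S^{2} = \id$, or $S$ being $\ast$-preserving) genuinely enters is earlier, in ensuring that $\mathrm{ad}$ as you wrote it is a $\ast$-homomorphism, so that invariance of a \emph{state} is a meaningful condition at the operator-algebraic level; outside the Kac case one must replace $S$ by the unitary antipode, and the resulting intertwiner relation picks up the modular matrix $Q_{\alpha}$, so $\widehat{\varphi}(\alpha)$ is no longer a scalar but a scalar times a power of $Q_{\alpha}$. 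The appendix of the paper alludes to exactly this modification. So the dependence on Kac type is real, just not located where you placed it.
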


Such \emph{central states} are completely determined by their restriction to the so-called \emph{central algebra} of $\G$, which is simply the algebra $\O(\G)_{0}$ generated by the characters, thanks to the equality
\begin{equation*}
\varphi(\chi_{\alpha}) = \sum_{i=1}^{d_{\alpha}}\varphi(u_{ii}^{\alpha}) = d_{\alpha}\varphi(\alpha).
\end{equation*}
In several key examples, the central algebra is commutative, hence states exactly correspond to measures on its spectrum. A particular case is that of Dirac measures, i.e. evaluation at one point. This setting covers natural analogues of the random walk associated to the uniform measure on a conjugacy class. To see this, assume that $\O(\G)$ is generated by the coefficients of a representation $u$ of dimension $N$ and let $G$ be the abelianization of $\G$, that is to say the compact group such that $\O(G)$ is the maximal abelian quotient of $\O(\G)$. By construction, $G$ is realized as a group of $N\times N$ matrices. Let $g\in G$ and let $\ev_{g} : \O(\G)\to \C$ be the algebra map sending $u_{ij}$ to $g_{ij}$. Then,
\begin{equation*}
\varphi_{g} = h\circ m^{(2)}\circ(\id\otimes \ev_{g}\otimes S)\circ\Delta^{(2)}
\end{equation*}
is a state on $\O(\G)$, where $\Delta^{(2)} = (\id\otimes\Delta)\circ\Delta$, $m^{(2)} = m\circ(\id\otimes m)$ and $S$ is the antipode. If $\G$ is classical, then for any function $f$ one has
\begin{equation*}
\varphi_{g}(f) = \int_{G}f(kgk^{-1})\dd k
\end{equation*}
so that $\varphi_{g}$ is the uniform measure on the conjugacy class of $g$. In the general case, the centrality of $\varphi_{g}$ is easily checked :
\begin{align*}
\varphi_{g}(u^{\alpha}_{ij}) & = \sum_{k,l=1}^{d_{\alpha}}h(u^{\alpha}_{ik}\ev_{g}(u^{\alpha}_{kl})u^{\alpha\ast}_{jl}) = \sum_{k,l=1}^{d_{\alpha}}\ev_{g}(u^{\alpha}_{kl})h(u^{\alpha}_{ik}u^{\alpha\ast}_{jl}) \\
& = \sum_{k,l=1}^{d_{\alpha}}\ev_{g}(u^{\alpha}_{kl})\frac{\delta_{ij}\delta_{kl}}{d_{\alpha}} = \delta_{ij}\frac{\ev_{g}(\chi_{\alpha})}{d_{\alpha}}.
\end{align*}
Several interesting random walks on compact Lie groups are of this type and we will study them in the quantum setting in the next sections.

\section{Free orthogonal quantum groups}\label{sec:orthogonal}

The main example which we will study in this work is free orthogonal quantum groups. These objects, denoted by $O_{N}^{+}$, were first introduced by S. Wang in \cite{wang1995free}. Here is how the associated involutive Hopf algebra is defined :

\begin{de}
Let $\O(O_{N}^{+})$ be the universal $*$-algebra generated by $N^{2}$ \emph{self-adjoint} elements $u_{ij}$ such that for all $1\leqslant i, j \leqslant N$,
\begin{equation*}
\sum_{k=1}^{N}u_{ik}u_{jk} = \delta_{ij} = \sum_{k=1}^{N}u_{ki}u_{kj}.
\end{equation*}
The formula
\begin{equation*}
\D(u_{ij}) = \sum_{k=1}^{N}u_{ik}\otimes u_{kj}
\end{equation*}
extends to a $*$-algebra homomorphism $\D : \O(O_{N}^{+})\to \O(O_{N}^{+})\otimes \O(O_{N}^{+})$ and this can be completed into a compact quantum group structure.
\end{de}

The relations defining $\O(O_{N}^{+})$ are equivalent to requiring that the matrix $[u_{ij}]_{1\leqslant i, j\leqslant N}$ is orthogonal. Using this, it is easy to see that the abelianization of $O_{N}^{+}$ is the orthogonal group $O_{N}$. To compute upper bounds for random walks, we need a description of the representation theory of these objects. In fact, since we will only consider central states, all we need is a description of the central algebra which comes from the work of T. Banica \cite{banica1996theorie}.

\begin{thm}[Banica]
The irreducible representations of $O_{N}^{+}$ can be labelled by positive integers, with $u^{0}$ being the trivial representation and $u^{1} = [u_{ij}]_{1\leqslant i, j\leqslant N}$. Moreover, the characters satisfy the following recursion relation :
\begin{equation}\label{eq:charactersorthogonal}
\chi_{1}\chi_{n} = \chi_{n+1} + \chi_{n-1}.
\end{equation}
\end{thm}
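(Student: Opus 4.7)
The plan is to mimic the classical proof of the Clebsch--Gordan formula for $SU(2)$, with the Temperley--Lieb category replacing the Brauer one. Concretely, I would construct the irreducibles as subrepresentations of tensor powers of the fundamental representation $u = [u_{ij}]_{1 \leq i,j \leq N}$ and read off the fusion rule from the recursive definition of the Jones--Wenzl projectors.

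The only structural input required about $\O(O_N^+)$ is that the defining relations together with the self-adjointness of the coefficients are exactly what is needed for the vector $t = \sum_{i=1}^N e_i \otimes e_i \in \C^N \otimes \C^N$ to satisfy $(u \otimes u)(t \otimes 1) = t \otimes 1$; equivalently, the map $\C \to \C^N \otimes \C^N$, $1 \mapsto t$, is an intertwiner from the trivial representation to $u \otimes u$. Applying Woronowicz's Tannaka--Krein reconstruction theorem, the intertwiner category of $O_N^+$ is the smallest concrete rigid tensor $C^*$-category generated by $u$ and the morphism $t$. A direct diagrammatic computation identifies this category with the Temperley--Lieb category $TL_N$: $\mathrm{Hom}(u^{\otimes m}, u^{\otimes n})$ is spanned by non-crossing pair partitions of $m+n$ points, with composition counting closed loops with weight $N$.

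Once this identification is in place, the rest is standard Temperley--Lieb combinatorics. For $N \geq 2$ the Jones--Wenzl idempotents $p_n \in \mathrm{End}(u^{\otimes n})$ are defined by Wenzl's inductive formula, and a Markov-trace computation shows that the subrepresentations $u^n := p_n(u^{\otimes n})$ are irreducible and pairwise inequivalent, with $u^0$ the trivial representation and $u^1 = u$. Wenzl's recursion is itself an orthogonal decomposition $p_n \otimes 1 = p_{n+1} + q_n$ in which $q_n$ factors through a Temperley--Lieb cap--cup pair, hence is equivalent to the projection onto $u^{n-1}$. This exhibits the decomposition $u^1 \otimes u^n \simeq u^{n+1} \oplus u^{n-1}$, and taking traces yields the fusion rule $\chi_1 \chi_n = \chi_{n+1} + \chi_{n-1}$. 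Completeness of the list $\{u^n\}_{n \geq 0}$ follows from the fact that these characters already span the central algebra of $O_N^+$, since an easy induction on the recursion shows that each $\chi_1^k$ is a linear combination of $\chi_0, \dots, \chi_k$.

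The main obstacle is the Tannaka--Krein step: one must show that no relation between the generating intertwiners holds beyond those defining $TL_N$, equivalently that the diagrammatic Temperley--Lieb category is faithful at parameter $\delta = N$ whenever $N \geq 2$. This is a nontrivial combinatorial statement, essentially equivalent to the non-vanishing of the appropriate Gram determinants of Temperley--Lieb diagrams, and the degenerate case $N = 1$, corresponding to $O_1^+ = \Z/2\Z$, behaves differently.
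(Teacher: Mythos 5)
The paper does not prove this theorem: it is quoted verbatim from Banica's paper \cite{banica1996theorie}, so there is no internal argument to compare against. Your outline is nonetheless essentially Banica's original argument, now the standard one: identify the intertwiner category of $O_N^+$ with the Temperley--Lieb category via Woronowicz's Tannaka--Krein duality, then run the Jones--Wenzl machinery. The structural observations you make are all correct: the self-adjointness of the $u_{ij}$ together with the orthogonality relations is exactly equivalent to the map $1 \mapsto t = \sum_i e_i\otimes e_i$ being an intertwiner from the trivial representation to $u\otimes u$, Tannaka--Krein then identifies $\mathrm{Hom}(u^{\otimes m}, u^{\otimes n})$ with the linear span of noncrossing pairings of $m+n$ points, and Wenzl's recursion $p_n\otimes 1 = p_{n+1} + q_n$ encodes the fusion $u\otimes u^{n} \simeq u^{n+1}\oplus u^{n-1}$ upon taking characters.

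The gap you flag at the end is, however, genuine and is in fact the crux of the theorem: Tannaka--Krein only tells you that the concrete intertwiner spaces are \emph{spanned} by the diagrams, so a priori they could be a proper quotient of the abstract Temperley--Lieb category, in which case some Jones--Wenzl projectors could vanish and the fusion ring would be smaller. You need that the noncrossing pairing operators on $(\C^N)^{\otimes n}$ are linearly independent. The usual way to close this for $N\geq 2$ is not a raw Gram-determinant computation but a positivity argument: the Markov trace on $TL_n(N)$ (which is the Haar state applied to $\chi_1^{2n}$, i.e.\ the categorical trace) factors through the representation on $(\C^N)^{\otimes n}$, and it is a faithful positive trace for $N\geq 2$ because the weights in its decomposition are ratios $u_k(N)/u_{k-1}(N) > 0$; faithfulness of the trace forces injectivity of the representation. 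Equivalently, Banica computes $h(\chi_1^{2n}) = C_n$ (Catalan), deduces that $\chi_1$ is semicircular of radius $2$, and reads the orthonormality of the $\chi_n$ directly off the orthogonality of the dilated Chebyshev polynomials for the semicircle law --- giving irreducibility and pairwise inequivalence in one stroke. Either of these fills the gap you identified; and as you say, $N=1$ really is degenerate, since $u_2(1)=0$ so $\chi_2$ is the character of the zero representation and $O_1^+$ collapses to $\Z/2\Z$.
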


In particular, the central algebra $\O(\G)_{0}$ is abelian and in fact isomorphic to $\C[X]$. Moreover, the recursion relation \eqref{eq:charactersorthogonal} is reminiscent of that of Chebyshev polynomials of the second kind. Indeed, let $U_{n}$ be these polynomials, i.e. $U_{n}(\sin(\theta)) = \sin(n\theta)$ for all $\theta\in \R$. Then, $u_{n}(x) = U_{n}(x/2)$ satisfies Equation \eqref{eq:charactersorthogonal}. With this in hand, it can be proven that the map sending $\chi_{n}$ to $u_{n}$ is an isomorphism. Moreover, we have the equality $d_{n} = u_{n}(N)$.

\subsection{Pure state random walks}\label{subsec:purestates}

A state is said to be \emph{pure} if it cannot be written as a non-trivial convex combination of other states. Moreover, pure states on $\O(O_{N}^{+})$ are still pure when restricted to the central algebra and it is well-known that pure states on an abelian algebra are given by evaluation at points of the spectrum. For $O_{N}^{+}$, the spectrum of $\chi_{1}$ in the enveloping C*-algebra of $\O(O_{N}^{+})$ is $[-N, N]$ by \cite[Lem 4.2]{brannan2011approximation} so that for any $t\in [-N, N]$ there is a central state $\varphi_{t}$ on $\O(O_{N}^{+})$ defined by $\varphi_{t}(n) = u_{n}(t)/d_{n}\in \R$. It follows from the definition that for a central state $\varphi$, we have $\varphi^{\ast k}(n) = \varphi(n)^{k}$, so that by Lemma \ref{lem:upperbound}
\begin{equation*}
\|\varphi_{t}^{\ast k} - h\|_{TV}^{2} \leqslant \frac{1}{4}\sum_{n=1}^{+\infty}d_{n}\frac{u_{n}(t)^{2k}}{d_{n}^{2k-1}} = \frac{1}{4}\sum_{n=1}^{+\infty}\frac{u_{n}(t)^{2k}}{d_{n}^{2k-2}}
\end{equation*}
and we will have to bound specific values of the polynomials $u_{n}$. It turns out that the behaviour of Chebyshev polynomials is very different if the argument is less than or greater than one and this will be reflected in the existence or absence of a kind of cut-off phenomenon for the associated random walks. Before turning to this, let us give general tools for the computations. Assume that $t > 2$ and let $0 < q(t) < 1$ be such that $t = q(t) + q(t)^{-1}$, i.e.
\begin{equation*}
q(t) = \frac{t - \sqrt{t^{2} - 4}}{2}.
\end{equation*}
Then, it can be shown by induction that
\begin{equation*}
u_{n}(t) = \frac{q(t)^{-n-1} - q(t)^{n+1}}{q(t)^{-1} - q(t)}.
\end{equation*}
This writing enables to efficiently bound $u_{n}(t)$ :

\begin{lem}\label{lem:encadrement}
For all $n\geqslant 1$ and $t\geqslant 2$,
\begin{equation*}
tq(t)^{-(n-1)} \leqslant u_{n}(t)\leqslant\frac{q(t)^{-n}}{1-q(t)^{2}}
\end{equation*}
\end{lem}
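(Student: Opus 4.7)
The key is that Lemma~\ref{lem:encadrement} is a purely algebraic statement about the closed-form expression
\[
u_{n}(t)=\frac{q^{-n-1}-q^{n+1}}{q^{-1}-q}, \qquad q=q(t)\in(0,1],
\]
derived just before the statement, so I would reduce everything to manipulations of this formula and not invoke the recursion directly.

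For the \emph{upper bound}, I would simply note that $q\in(0,1]$ implies $q^{n+1}>0$, so
\[
u_{n}(t)=\frac{q^{-n-1}-q^{n+1}}{q^{-1}-q}\leqslant\frac{q^{-n-1}}{q^{-1}-q}=\frac{q^{-n-1}}{q^{-1}(1-q^{2})}=\frac{q^{-n}}{1-q^{2}},
\]
which is exactly the right-hand bound (for $t=2$ the right-hand side is $+\infty$, so the inequality is vacuous).

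For the \emph{lower bound}, since $q^{-1}-q\geqslant 0$, the inequality $u_{n}(t)\geqslant t q^{-(n-1)}$ is equivalent, after multiplying through by $q^{-1}-q$ and using $t=q+q^{-1}$, to
\[
q^{-n-1}-q^{n+1}\;\geqslant\;(q+q^{-1})q^{-(n-1)}(q^{-1}-q)=q^{-n-1}-q^{3-n},
\]
i.e.\ $q^{n+1}\leqslant q^{3-n}$. Since $q\in(0,1]$, this last inequality is equivalent to $2n-2\geqslant 0$, which holds for every $n\geqslant 1$. The case $t=2$ (where $q=1$) reads $n+1\geqslant 2$, which also holds.

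There is no real obstacle here: both inequalities follow by transparent manipulations of the explicit formula, and the only subtle point is handling the boundary $t=2$, $q=1$ (either by direct substitution or by taking limits in the generic identity). I would present the argument in essentially one short paragraph after recalling the closed form.
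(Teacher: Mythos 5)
Your proof is correct, but it follows a different route from the paper's. The paper introduces the auxiliary sequence $a_{n}=u_{n}(t)q(t)^{n}$, computes the ratio
\[
\frac{a_{n+1}}{a_{n}}=\frac{q(t)^{-n}-q(t)^{n+2}}{q(t)^{-n}-q(t)^{n}}>1,
\]
and concludes that $(a_{n})_{n}$ is increasing; the two bounds then drop out simultaneously, the lower bound being the first term $a_{1}=tq(t)=1+q(t)^{2}$ and the upper bound being the limit $\lim_{n}a_{n}=1/(1-q(t)^{2})$. You instead prove the two inequalities independently by direct manipulation of the closed form: discarding the negative term $-q^{n+1}$ for the upper bound, and reducing the lower bound to the monomial comparison $q^{n+1}\leqslant q^{3-n}$. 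Both arguments are valid and of comparable length; your version is marginally more elementary (no ratio, no limit) and explicitly treats the degenerate boundary $t=2$, $q=1$, where the closed form is $0/0$, a case the paper glosses over. The paper's version, on the other hand, exposes a slightly stronger structural fact — strict monotonicity of the normalized sequence $u_{n}(t)q(t)^{n}$ — and derives both endpoints of the bracketing from a single computation rather than two.
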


\begin{proof}
Consider the sequence $a_{n} = u_{n}(t)q(t)^{n}$. Then,
\begin{equation*}
\frac{a_{n+1}}{a_{n}} = q(t)\frac{u_{n+1}(t)}{u_{n}(t)} = q(t)\frac{q(t)^{-n-1} - q(t)^{n+1}}{q(t)^{-n} - q(t)^{n}} = \frac{q(t)^{-n} - q(t)^{n+2}}{q(t)^{-n} - q(t)^{n}} > 1
\end{equation*}
so that $(a_{n})_{n\in \N}$ is increasing. It is therefore always greater than its first term, which is $q(t)t = 1+q(t)^{2}$ and always less than its limit, which is
\begin{equation*}
\frac{q(t)^{-1}}{q(t)^{-1} - q(t)} = \frac{1}{1-q(t)^{2}}.
\end{equation*}
\end{proof}

To lighten notations, let us set
\begin{equation*}
A_{k}(t) = \sum_{n=1}^{+\infty}\frac{u_{n}(t)^{2k}}{d_{n}^{2k-2}} = \sum_{n=1}^{+\infty}\frac{u_{n}(t)^{2k}}{u_{n}(N)^{2k-2}}
\end{equation*}

\subsubsection{Random walks associated to small pure states}

We start with the case where $t$ is less than $2$. As we will see, things are then rather simple.

\begin{prop}\label{prop:upperboundlessthantwo}
Let $\vert t\vert < 2$ be fixed. Then, for any $k\geqslant 2$,
\begin{equation*}
\|\varphi_{t}^{\ast k} - h\|_{TV}\leqslant \frac{N}{2\sqrt{1-q(N)^{2}}}\left(\frac{1}{N\sqrt{1 - t^{2}/4}}\right)^{k}
\end{equation*}
In particular, if $t<2\sqrt{1-N^{-2}}$ then the random walk associated to $\varphi_{t}$ converges exponentially.
\end{prop}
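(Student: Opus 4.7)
The plan is to apply the Diaconis--Shahshahani upper bound Lemma \ref{lem:upperbound} to the central state $\varphi_t$, which gives
\[
\|\varphi_{t}^{\ast k} - h\|_{TV}^{2} \leqslant \frac{1}{4}A_{k}(t) = \frac{1}{4}\sum_{n=1}^{+\infty}\frac{u_{n}(t)^{2k}}{u_{n}(N)^{2k-2}},
\]
so that everything reduces to bounding the numerator and denominator of each term separately and summing a geometric series.

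First I would handle the numerator. Since $|t|<2$, I write $t=2\cos\theta$ for some $\theta\in(0,\pi)$. Because $u_{n}(x)=U_{n}(x/2)$ where $U_{n}$ is the standard Chebyshev polynomial of the second kind, one gets $u_{n}(t)=\sin((n+1)\theta)/\sin\theta$ (this is the usual trigonometric identity coming directly from the three-term recursion $\chi_{1}\chi_{n}=\chi_{n+1}+\chi_{n-1}$). Therefore
\[
|u_{n}(t)|\leqslant \frac{1}{\sin\theta}=\frac{1}{\sqrt{1-t^{2}/4}}
\]
uniformly in $n$, which already factors out a clean $(1-t^{2}/4)^{-k}$ from the sum.

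Next I would handle the denominator using the lower bound in Lemma \ref{lem:encadrement} at the point $N\geqslant 2$, namely $u_{n}(N)\geqslant N q(N)^{-(n-1)}$, so
\[
\frac{1}{u_{n}(N)^{2k-2}}\leqslant \frac{q(N)^{(n-1)(2k-2)}}{N^{2k-2}}.
\]
Summing the geometric series in $n$ produces a factor $1/(1-q(N)^{2k-2})$, and since $0<q(N)<1$ and $k\geqslant 2$ one has $q(N)^{2k-2}\leqslant q(N)^{2}$, whence $1/(1-q(N)^{2k-2})\leqslant 1/(1-q(N)^{2})$. Combining the two bounds yields
\[
A_{k}(t)\leqslant \frac{1}{(1-t^{2}/4)^{k}}\cdot\frac{1}{N^{2k-2}(1-q(N)^{2})},
\]
and taking the square root after plugging into the upper bound lemma rearranges exactly into the claimed expression $\frac{N}{2\sqrt{1-q(N)^{2}}}(N\sqrt{1-t^{2}/4})^{-k}$. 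The exponential convergence assertion is then immediate: the geometric ratio is strictly less than $1$ precisely when $N^{2}(1-t^{2}/4)>1$, i.e.\ when $|t|<2\sqrt{1-N^{-2}}$.

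There is no real obstacle here: the only points requiring mild care are noticing that the hypothesis $k\geqslant 2$ is exactly what makes $q(N)^{2k-2}\leqslant q(N)^{2}$ valid (so that the $k$-dependence can be cleanly extracted from the denominator of the geometric sum), and remembering that Lemma \ref{lem:encadrement} does apply at $t=N\geqslant 2$. Everything else is bookkeeping on the exponents.
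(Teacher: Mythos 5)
Your proof is correct and follows exactly the same route as the paper: bound $|u_n(t)|$ by $1/\sqrt{1-t^2/4}$ via the trigonometric form of the Chebyshev polynomials, bound $u_n(N)$ from below by the first inequality in Lemma \ref{lem:encadrement}, sum the resulting geometric series, and use $k\geqslant 2$ to replace $q(N)^{2k-2}$ by $q(N)^2$. There is nothing to add; the bookkeeping is identical to the paper's.
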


\begin{proof}
Because $\vert t\vert\leqslant 2$, there exists $\theta$ such that $t = 2\cos(\theta)$. Thus,
\begin{equation*}
\vert u_{n}(t)\vert = \vert U_{n}(\cos(\theta))\vert = \left\vert \frac{\sin\left((n+1)\theta\right)}{\sin(\theta)}\right\vert \leqslant \frac{1}{\vert\sin(\theta)\vert}
\end{equation*}
and
\begin{align*}
A_{k}(t) & \leqslant \sum_{n=1}^{+\infty}\frac{1}{\vert\sin(\theta)^{2k}\vert u_{n}(N)^{2k-2}} \\
& \leqslant \frac{1}{\vert\sin(\theta)\vert^{2k}}\sum_{n=1}^{+\infty}\left(\frac{q(N)^{n-1}}{N}\right)^{2k-2} \\
& = \frac{1}{\vert\sin(\theta)\vert^{2k}N^{2k-2}}\frac{1}{1-q(N)^{2k-2}} \\
& \leqslant \frac{N^{2}}{1-q(N)^{2}}\left(\frac{1}{N\vert\sin(\theta)\vert}\right)^{2k} \\
\end{align*}
The result now follows from Lemma \ref{lem:upperbound} and the fact that $\vert\sin(\theta)\vert = \sqrt{1-t^{2}/4}$. Note that for $k = 1$, we get the sum of $\vert \sin((n+1)\theta)\vert/\vert\sin(\theta)\vert$ which need not converge even though $\varphi_{t}$ is bounded on $L^{\infty}(O_{N}^{+})$.
\end{proof}

Proposition \ref{prop:upperboundlessthantwo} shows that for a fixed $t$, the distance to the Haar state decreases exponentially provided $N$ is large enough and it is natural to wonder how optimal the rate $1/N\sqrt{1-t^{2}/4}$ is. We give a partial answer through a lower obtained by the duality between the noncommutative $L^{1}$ and $L^{\infty}$ spaces of a tracial von Neumann algebra. Concretely, this means that for any $a\in L^{1}(\G)$,
\begin{equation*}
\|a\|_{1} = \sup\{h(ax)\mid \|x\|_{\infty}\leqslant 1\} = \|\varphi\|.
\end{equation*}

\begin{prop}\label{prop:lowerbound}
For any $t\in [-N, N]$ and any $k\geqslant 1$,
\begin{equation*}
\|\varphi^{\ast k} - h\|_{TV} \geqslant \frac{N}{4}\left(\frac{t}{N}\right)^{k}.
\end{equation*}
\end{prop}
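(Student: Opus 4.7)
The plan is to combine Lemma~\ref{lem:totalvariation} with the $L^{\infty}$--$L^{1}$ duality recalled just before the statement. Writing $a = a_{\varphi_{t}^{\ast k}}$ for the density of $\varphi_{t}^{\ast k}$ with respect to $h$, one has
\begin{equation*}
\|\varphi_{t}^{\ast k} - h\|_{TV} = \tfrac{1}{2}\|a - 1\|_{1} \geq \tfrac{1}{2}\left\vert(\varphi_{t}^{\ast k} - h)(x)\right\vert
\end{equation*}
for every $x \in L^{\infty}(O_{N}^{+})$ with $\|x\|_{\infty}\leq 1$. So the strategy is to exhibit a single such $x$ whose pairing with $\varphi_{t}^{\ast k} - h$ has magnitude at least $(N/2)(t/N)^{k}$. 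The natural candidate is $x = \chi_{1}/2$: it sits in the central algebra and is trivial to evaluate on a central state.

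First I would verify that $\|\chi_{1}\|_{\infty} = 2$ in $L^{\infty}(O_{N}^{+})$. The character $\chi_{1}$ is self-adjoint (the generators $u_{ij}$ are), so the von Neumann subalgebra it generates is abelian and its $L^{\infty}$-norm coincides with the supremum of the support of its spectral distribution under the faithful tracial Haar state. To identify that distribution, use the recursion $\chi_{1}\chi_{n} = \chi_{n+1}+\chi_{n-1}$ with $\chi_{0} = 1$ to see that $\chi_{n} = U_{n}(\chi_{1}/2)$, where $U_{n}$ is the $n$-th Chebyshev polynomial of the second kind. Combined with the Schur orthogonality $h(\chi_{n}\chi_{m}) = \delta_{nm}$ and the fact that the $U_{n}$ are orthonormal precisely with respect to the rescaled Wigner measure, this forces the distribution of $\chi_{1}$ under $h$ to be the semicircle law on $[-2,2]$, whence $\|\chi_{1}\|_{\infty} = 2$.

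It remains to compute the pairing. Centrality of $\varphi_{t}$ gives $\varphi_{t}(u_{ij}^{n}) = \delta_{ij}u_{n}(t)/d_{n}$, so by the convolution formula $\varphi_{t}^{\ast k}(u_{ij}^{n}) = \delta_{ij}(u_{n}(t)/d_{n})^{k}$; summing the diagonal yields $\varphi_{t}^{\ast k}(\chi_{n}) = u_{n}(t)^{k}/d_{n}^{k-1}$, hence $\varphi_{t}^{\ast k}(\chi_{1}) = t^{k}/N^{k-1}$. Since $h(\chi_{1}) = 0$, plugging this into the bound above produces
\begin{equation*}
\|\varphi_{t}^{\ast k} - h\|_{TV} \geq \frac{1}{2}\cdot\frac{t^{k}}{2N^{k-1}} = \frac{N}{4}\left(\frac{t}{N}\right)^{k},
\end{equation*}
which is the desired inequality. (For $t<0$ the same argument in fact produces $|t|^{k}$ in the numerator, which is stronger; when $\varphi_{t}^{\ast k}$ has no $L^{1}$-density the inequality is vacuous.) The only mildly nontrivial ingredient is the norm computation $\|\chi_{1}\|_{\infty} = 2$, which is the free-probabilistic feature distinguishing $O_{N}^{+}$ from $SO(N)$; once it is in hand, everything else is an immediate calculation.
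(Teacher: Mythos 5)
Your proof is correct and takes essentially the same route as the paper: pair $\varphi_t^{\ast k}-h$ against the normalized first character via the $L^1$--$L^\infty$ duality and use $\|\chi_1\|_\infty = 2$ together with $\varphi_t^{\ast k}(\chi_1) = t^k/N^{k-1}$. The only difference is cosmetic — the paper writes the bound as a supremum over all $n$ and then specializes to $n=1$, taking $\|\chi_1\|_\infty = 2$ as known, whereas you fix $n=1$ from the start and supply the (correct) free-probabilistic justification that $\chi_1$ is semicircular under $h$, which is a nice addition but not a different argument.
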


\begin{proof}
Recall that $\varphi^{\ast k}(n) = \varphi(n)^{k}$ and that $h(\chi_{n}) = 0$ for all $n\geqslant 1$. Thus,
\begin{equation*}
\|\varphi^{\ast k} - h\| \geqslant \frac{1}{2}\sup_{n \geqslant 1}\frac{\varphi^{\ast k}(\chi_{n})}{\|\chi_{n}\|_{\infty}} = \frac{1}{2}\sup_{n \geqslant 1}\frac{d_{n}}{\|\chi_{n}\|_{\infty}}\left(\frac{u_{n}(t)}{d_{n}}\right)^{k}.
\end{equation*}
Taking $n=1$ and using $\|\chi_{1}\|_{\infty} = 2$ then yields the result.
\end{proof}

Even though this bound is very general since it works for all $t$, it yields the same exponential rate as Proposition \ref{prop:upperboundlessthantwo} for $t = \pm \sqrt{2}$, meaning that the bound of Proposition \ref{prop:upperboundlessthantwo} is rather tight.

\subsubsection{The cut-off phenomenon}

We now turn to the case when $\vert t\vert$ is larger than two. The corresponding states will exhibit a kind of cut-off phenomenon : for a number of steps (depending on $t$ and $N$), the state is not absolutely continuous and as soon as it is, it converges exponentially. Let us first consider the boundedness problem.

\begin{lem}\label{lem:boundednesscriterion}
Let $\vert t\vert > 2$ be fixed. Then, $\varphi_{t}^{\ast k}$ extends to $L^{\infty}(O_{N}^{+})$ if and only if $q(t) > q(N)^{1-1/k}$. Moreover, it then has an $L^{1}$-density with respect to $h$.
\end{lem}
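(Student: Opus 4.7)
The idea is that $\varphi_t^{\ast k}$ is central with $\varphi_t^{\ast k}(\chi_n) = u_n(t)^k/d_n^{k-1}$, so its analytic behaviour is controlled by the growth of these coefficients. Since all irreducible representations of $O_N^+$ are self-conjugate (the fundamental representation has self-adjoint entries), the Woronowicz orthogonality relations imply that $\chi_n^* = \chi_n$ and $h(\chi_m^*\chi_n) = \delta_{mn}$, so $\{\chi_n\}_{n\geq 0}$ is an orthonormal family in $L^2(O_N^+)$. The proof then splits into the two directions.

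For the sufficient direction, assume $q(t) > q(N)^{1-1/k}$. Using the upper bound $u_n(t) \leq q(t)^{-n}/(1-q(t)^2)$ together with the lower bound $d_n \geq Nq(N)^{-(n-1)}$ from Lemma \ref{lem:encadrement}, each summand of
\[
A_k(t) = \sum_{n=0}^\infty \frac{u_n(t)^{2k}}{d_n^{2k-2}}
\]
is dominated by a geometric term with ratio $(q(N)^{k-1}/q(t)^k)^2 < 1$, so $A_k(t) < \infty$. Hence the formal series
\[
a \;:=\; \sum_{n=0}^\infty \frac{u_n(t)^k}{d_n^{k-1}}\,\chi_n
\]
converges in $L^2(O_N^+) \subset L^1(O_N^+)$. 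A direct computation using the orthogonality relations and centrality verifies that $h(a\cdot u_{ij}^n) = \delta_{ij}(u_n(t)/d_n)^k = \varphi_t^{\ast k}(u_{ij}^n)$ on every matrix coefficient, so $a$ is the $L^1$-density of $\varphi_t^{\ast k}$, and the formula $x \mapsto h(ax)$ gives the bounded (normal) extension to $L^\infty(O_N^+)$ with $|h(ax)| \leq \|a\|_1\|x\|_\infty$.

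For the necessary direction, suppose $\varphi_t^{\ast k}$ extends to a bounded state $\tilde\psi$ on $L^\infty(O_N^+)$. Restricting $\tilde\psi$ to the abelian subalgebra $C^*(\chi_1) \subset L^\infty(O_N^+)$—which via continuous functional calculus is isomorphic to $C([-2,2])$, since $\chi_1$ has spectrum $[-2,2]$ in $L^\infty(O_N^+)$ (the support of its semicircular distribution under $h$)—produces a probability measure $\mu$ on $[-2,2]$ satisfying
\[
\int_{-2}^{2} U_n(x/2)\,d\mu(x) \;=\; \frac{u_n(t)^k}{d_n^{k-1}}.
\]
Since $\|U_n\|_{L^\infty[-1,1]} = n+1$, this forces $u_n(t)^k/d_n^{k-1} \leq n+1$. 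On the other hand the lower bound $u_n(t) \geq t q(t)^{-(n-1)}$ from Lemma \ref{lem:encadrement} yields
\[
\frac{u_n(t)^k}{d_n^{k-1}} \;\geq\; C\left(\frac{q(N)^{k-1}}{q(t)^k}\right)^{n}
\]
for a positive constant $C$ depending only on $t,N,k$. When $q(t) < q(N)^{1-1/k}$, the right-hand side grows exponentially, eventually exceeding $n+1$, a contradiction. The main anticipated obstacle is the boundary case $q(t) = q(N)^{1-1/k}$, where the moments merely tend to a positive constant $C_0$ and the elementary growth comparison fails; ruling out extension there seems to require a more refined argument of Tauberian / Riemann–Lebesgue flavour (using the generating identity $\sum_n z^n U_n(\cos\theta) = 1/(1-2z\cos\theta+z^2)$) combined with the positivity of the putative measure $\mu$ near the endpoints $\pm 2$.
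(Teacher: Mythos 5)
Your proof follows essentially the same route as the paper: for necessity, compare the moments $\varphi_{t}^{\ast k}(\chi_{n})$ with $\|\chi_{n}\|_{\infty}=n+1$ and use the lower bound $u_{n}(t)\geqslant tq(t)^{-(n-1)}$ of Lemma \ref{lem:encadrement} to exhibit exponential growth of the normalized moments when $q(t)<q(N)^{1-1/k}$; for sufficiency, build the density as a convergent character series. Your variant — proving $L^{2}$-convergence via $A_{k}(t)<\infty$ rather than, as in the paper, $L^{\infty}$-convergence of the partial sums $a_{t,p}$ — is equally valid, since $L^{2}(\G)\subset L^{1}(\G)$ and $x\mapsto h(ax)$ then gives the bounded extension; and your reformulation through $C^{*}(\chi_{1})\cong C([-2,2])$ with $\|U_{n}\|_{\infty}=n+1$ is exactly the paper's estimate $\|\chi_{n}\|_{\infty}=n+1$ in slightly more explicit dress.

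The obstacle you flag at the boundary $q(t)=q(N)^{1-1/k}$ is real, and you should know it is also present in the paper's proof: the argument there likewise only derives unboundedness of $\varphi_{t}^{\ast k}(\chi_{n})/\|\chi_{n}\|_{\infty}$ under the strict inequality $q(t)<q(N)^{1-1/k}$. At the boundary the moments $\varphi_{t}^{\ast k}(\chi_{n})$ tend to a positive constant while $\|\chi_{n}\|_{\infty}$ grows linearly, so the elementary comparison gives no contradiction, and ruling out a (possibly singular) state extension would require an additional argument of the kind you sketch. Since for fixed $N$ and each $k\geqslant 2$ there is exactly one $t\in(2,N)$ with $q(t)=q(N)^{1-1/k}$, this is a genuine (if measure-zero) lacuna in the ``only if'' direction as stated; your awareness of it is a point in your favour rather than a defect of your proposal.
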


\begin{proof}
Because $\|\chi_{n}\|_{\infty} = n+1$,
\begin{equation*}
\frac{\varphi_{t}^{\ast k}(\chi_{n})}{\|\chi_{n}\|_{\infty}} = \frac{1}{n+1}\frac{u_{n}(t)^{k}}{u_{n}(N)^{k-1}}
\end{equation*}
so that Lemma \ref{lem:encadrement} yields
\begin{align*}
\frac{\varphi_{t}^{\ast k}(\chi_{n})}{\|\chi_{n}\|_{\infty}} & \geqslant \frac{1}{n+1}\left(q(N)^{n}(1-q(N)^{2})\right)^{k-1}\left(q(t)^{-n+1}t\right)^{k} \\
& = \left(\frac{q(N)^{k-1}}{q(t)^{k}}\right)^{n}\frac{(tq(t))^{k}(1-q(N)^{2})^{k-1}}{n+1}
\end{align*}
and this is not bounded in $n$ if $q(t) < q(N)^{1-1/k}$. If now $k$ satisfies the inequality in the statement, then a similar estimate shows that the sequence
\begin{equation*}
a_{t, p} = \sum_{n=0}^{p}\frac{u_{n}(t)^{k}}{u_{n}(N)^{k-1}}\chi_{n}
\end{equation*}
converges in $L^{\infty}(\G)\subset L^{1}(\G)$ and its limit is the density of $\varphi_{t}^{\ast k}$.
\end{proof}

The previous statement may be disappointing in that for a fixed $t$, the number $k$ goes to $1$ as $N$ goes to infinity. However, in the cases coming from classical random walks, $t$ depends on $N$ and we then get a cut-off parameter which also depends on $N$, see Subsection \ref{subsec:randomrotations}. Let us now prove that as soon as $\varphi_{t}^{\ast k}$ is absolutely continuous, it converges exponentially to the Haar state. This is the main result of this section.

\begin{thm}\label{thm:estimategreaterthan2}
Let $\vert t\vert > 2$ be fixed and let $k_{0}$ be the smallest integer such that $q(t) > q(N)^{1-1/k_{0}}$. Then, for any $k\geqslant k_{0}$,
\begin{equation*}
\|\varphi_{t}^{\ast k} - h\|_{TV}\leqslant \frac{1}{2}\frac{Nq(t)^{k_{0}}}{\sqrt{q(t)^{2k_{0}} - q(N)^{2k_{0}-2}}}\left(\frac{1}{Nq(t)(1-q(t)^{2})}\right)^{k}
\end{equation*}
Moreover, there exists $t_{0}$ and $t_{1}$ depending on $N$ satisfying $2 < t_{0} < 4/\sqrt{3} < t_{1} < N$ and such that if $t_{0} < \vert t\vert < t_{1}$, then the random walk associated to $\varphi_{t}$ converges exponentially after $k_{0}$ steps.
\end{thm}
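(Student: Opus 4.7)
The plan is to plug the estimates of Lemma \ref{lem:encadrement} into the quantity $A_k(t)$ controlling the upper bound lemma. Starting from Lemma \ref{lem:upperbound}, we have $\|\varphi_t^{\ast k} - h\|_{TV}^2 \leq A_k(t)/4$, so it suffices to bound each summand $u_n(t)^{2k}/u_n(N)^{2k-2}$. Using the upper bound $u_n(t)\leq q(t)^{-n}/(1-q(t)^2)$ in the numerator and the lower bound $u_n(N)\geq Nq(N)^{-(n-1)}$ in the denominator, the resulting expression factors as a constant depending only on $k$ times $r_k^n$, where $r_k := q(N)^{2k-2}/q(t)^{2k}$. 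Thus $A_k(t)$ is dominated by a geometric series in $n$.

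This series converges exactly when $r_k < 1$, which after rearrangement reads $q(t) > q(N)^{1-1/k}$, so by definition of $k_0$ it converges precisely when $k \geq k_0$. Summing yields
\begin{equation*}
A_k(t) \leq \frac{1}{(1-q(t)^2)^{2k} N^{2k-2} \bigl(q(t)^{2k} - q(N)^{2k-2}\bigr)}.
\end{equation*}
Extracting a factor $q(t)^{2k}$ from the last parenthesis and taking square roots gives a bound of the shape $\frac{N}{2\sqrt{1-r_k}}\bigl(Nq(t)(1-q(t)^2)\bigr)^{-k}$. Since $t < N$ implies $q(t) > q(N)$, the ratio $r_k$ is decreasing in $k$, so for $k \geq k_0$ one can replace $r_k$ by $r_{k_0}$; rewriting $\sqrt{1-r_{k_0}} = \sqrt{q(t)^{2k_0} - q(N)^{2k_0-2}}/q(t)^{k_0}$ produces exactly the form stated in the theorem.

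For the last claim, the bound decays exponentially with rate $\rho(t) := Nq(t)(1-q(t)^2)$, so exponential convergence follows once $\rho(t) > 1$, i.e. $f(t) := q(t)(1-q(t)^2) > 1/N$. Viewed as $q - q^3$ with $q = q(t) \in (0,1)$, the function $f$ attains its maximum $2\sqrt{3}/9$ at $q = 1/\sqrt{3}$, that is, at $t = q + q^{-1} = 4/\sqrt{3}$. Moreover $f(2) = 0$, and the identity $Nq(N) = 1 + q(N)^2$ gives $Nf(N) = 1 - q(N)^4 < 1$. Hence as soon as $N$ is large enough that $2\sqrt{3}/9 > 1/N$, the intermediate value theorem applied separately on $(2, 4/\sqrt{3})$ and on $(4/\sqrt{3}, N)$ produces the required $t_0$ and $t_1$, with $\rho(t) > 1$ throughout $(t_0, t_1)$. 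The main subtlety is the monotonicity argument allowing the bound to be written with $k_0$ in place of $k$ inside the square root; the rest is a routine summation and elementary analysis of the function $f$.
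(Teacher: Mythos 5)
Your argument is correct and follows essentially the same route as the paper: plug the two-sided bounds of Lemma \ref{lem:encadrement} into $A_k(t)$, recognize the geometric series with ratio $r_k = q(N)^{2k-2}/q(t)^{2k}$, use the monotonicity of $r_k$ in $k$ (from $q(t)>q(N)$) to replace $r_k$ by $r_{k_0}$, and analyze $q\mapsto q(1-q^2)$ for the exponential-convergence window. Your justification of the last claim is if anything slightly more careful than the paper's, since you explicitly check that $f(q(2))=0$ and $Nf(q(N))=1-q(N)^4<1$ before invoking the intermediate value theorem, whereas the paper relies only on continuity near the maximum.
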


\begin{proof}
We start the computation by using Lemma \ref{lem:encadrement} :
\begin{align*}
\frac{u_{n}(t)^{2k}}{u_{n}(N)^{2k-2}} & \leqslant \left(\frac{1}{q(t)^{n}(1-q(t)^{2})}\right)^{2k}\left(\frac{q(N)^{n-1}}{N}\right)^{2k-2} \\
& = \left(\frac{q(N)^{2k-2}}{q(t)^{2k}}\right)^{n-1}\frac{1}{N^{2k-2}(q(t)(1-q(t)^{2}))^{2k}}.
\end{align*}
By assumption, $q(t) > q(N)^{1-1/k}$ so that
\begin{align*}
A_{k}(t) & \leqslant\frac{1}{N^{2k-2}(q(t)(1-q(t)^{2}))^{2k}}\frac{1}{1-\frac{q(N)^{2k-2}}{q(t)^{2k}}} \\
& \leqslant\frac{1}{N^{2k-2}(q(t)(1-q(t)^{2}))^{2k}}\frac{1}{1-\frac{q(N)^{2k_{0}-2}}{q(t)^{2k_{0}}}} \\
& = \frac{N^{2}q(t)^{2k_{0}}}{q(t)^{2k_{0}} - q(N)^{2k_{0}-2}}\left(\frac{1}{Nq(t)(1-q(t)^{2})}\right)^{2k} \\
\end{align*}
and the result follows by Lemma \ref{lem:upperbound}.

The condition for exponential convergence is $q(t)(1-q(t)^{2}) > N^{-1}$. Consider the function $f : x\mapsto x(1-x^{2})$. Elementary calculus shows that its maximum is
\begin{equation*}
f\left(\frac{1}{\sqrt{3}}\right) = \frac{2}{3\sqrt{3}} > \frac{1}{3} \geqslant \frac{1}{N}.
\end{equation*}
Thus, there exists an open interval $I$ containing $1/\sqrt{3}$ such that $f(q(t)) > 1/N$ as soon as $q(t)$ is in $I$. Since $q(t) = 1/\sqrt{3}$ corresponds to $t = q(t) + q(t)^{-1} = 4/\sqrt{3}$, the proof is complete.
\end{proof}

So far our use of the term cut-off has been a little improper since we did not provide an upper bound for the total variation distance depending only on $(k-k_{0})/N$. We will see however that when considering particular values of $t$ related to classical random walks, one can sharpen the previous result into a genuine cut-off statement. To conclude this section, let us give an explicit formula for the threshold $k_{0}$. Taking the logarithm of both sides of the equality $q(t)^{k} > q(N)^{k-1}$ and noting that $q(t) > q(N)$ yields
\begin{equation*}
k_{0} = \left\lceil-\frac{\ln(q(N))}{\ln(q(t)/q(N))}\right\rceil.
\end{equation*}

\subsection{The quantum uniform plan Kac walk}\label{subsec:randomrotations}

In this section we will give an explicit example of cut-off phenomenon by considering the quantum analogue of the \emph{uniform plane Kac walk} on $SO(N)$. In the classical case, this was studied by J. Rosenthal in \cite{rosenthal1994random} and by B. Hough and Y. Jiang in \cite{hough2017cut} (who coined the name). In this model, a random rotation is obtained by randomly choosing a plane in $\R^{N}$ and then performing a rotation of some fixed angle $\theta$ in that plane. The corresponding measure is the uniform measure on the conjugacy class of a matrix $R_{\theta}$ corresponding to a rotation in a plane (they are all conjugate once the angle $\theta$ is fixed, so that the choice of the plane does not matter). As explained in Subsection \ref{subsec:randomwalks}, this uniform measure has a natural analogue on $O_{N}^{+}$. In a sense, we are now "quantum rotating" the plane of $R_{\theta}$ and the corresponding state is $\varphi_{R_{\theta}}$. Since
\begin{equation*}
\ev_{R_{\theta}}(\chi_{1}) = \Tr(R_{\theta}) = N-2+2\cos(\theta) = u_{1}(N-2+2\cos(\theta)),
\end{equation*}
it follows by induction that $\ev_{R_{\theta}}(\chi_{n}) = u_{n}(N-2+2\cos(\theta))$ so that $\varphi_{R_{\theta}} = \varphi_{N-2+2\cos(\theta)}$. Note that this in a sense means that two classical orthogonal matrices are quantum conjugate if and only if they have the same trace since the uniform measure on their conjugacy classes then coincide. This illustrates the fact that there is no "quantum $SO(N)$" subgroup in $O_{N}^{+}$.

Assuming that $\theta$ is fixed once and for all, we will show that the corresponding random walk has a cut-off. This means that we have to prove that there exists $k_{1}$ such that for $k_{1} + cN$ steps the total variation distance decreases exponentially in $c$ while for $k_{1} - cN$ steps it is bounded below by a function which decreases slowly in $c$. We will therefore split the arguments into two parts. To simplify notations let us set $\tau = 2(1-\cos(\theta))$.

\subsubsection{Upper bound}

We start with the upper bound. Since the parameter $t$ now depends on $N$, it is not even clear that the convolution powers of the state will ever extend to $L^{\infty}(\G)$. To get some insight into this problem, let us first consider the threshold for $t = N-\tau$ obtained in the previous section. For large $N$,
\begin{equation*}
-\frac{\ln(q(N))}{\ln(q(N-\tau)/q(N))} \sim \frac{N\ln(N)}{\tau}
\end{equation*}
which is exactly the cut-off parameter conjecture by J. Rosenthal the classical case (and proven there to be valid for $\theta = \pi$) and later confirmed by B. Hough and Y. Jiang in \cite{hough2017cut}. This suggests that the same phenomenon should occur for $O_{N}^{+}$. However, proving it requires some suitable estimates on the function $q(t)$. We start by giving some elementary inequalities.

\begin{lem}\label{lem:variousbounds}
The following inequalities hold for all $t, N\geqslant 4$ :
\begin{enumerate}
\item $\displaystyle\frac{q(N)}{q(N-\tau)} \leqslant \frac{N-\tau}{N}$,
\item $q(N) > 1/N$,
\item $N\ln\displaystyle\left(1-\frac{\tau}{N}\right) \leqslant -\tau$.
\end{enumerate}
\end{lem}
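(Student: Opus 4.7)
The plan is to reduce all three bounds to elementary manipulations, once we rewrite $q(t)$ in a more tractable form. The definition $q(t) = (t - \sqrt{t^2-4})/2$ is awkward for comparisons because it is a difference of close quantities; multiplying numerator and denominator by the conjugate gives
\begin{equation*}
q(t) = \frac{2}{t + \sqrt{t^2-4}},
\end{equation*}
and this is what I would use throughout.

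For item (2), the rewriting is almost the whole proof: since $\sqrt{N^2-4} < N$, the denominator $N + \sqrt{N^2-4}$ is strictly less than $2N$, which gives $q(N) > 1/N$ directly.

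For item (3), this is just the standard concavity inequality $\ln(1-x) \leq -x$ for $x \in [0,1)$ (for instance by comparing derivatives at $0$, or from $e^{-x} \geq 1-x$), applied at $x = \tau/N$ and then multiplied by $N$. One should check that $\tau/N < 1$, which follows from $\tau \leq 4 \leq N$.

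Item (1) is the one that requires a small computation, though still no obstacle. Using the conjugate form,
\begin{equation*}
\frac{q(N)}{q(N-\tau)} = \frac{(N-\tau) + \sqrt{(N-\tau)^2-4}}{N + \sqrt{N^2-4}},
\end{equation*}
so the target inequality is equivalent, after clearing denominators and cancelling the common term $N(N-\tau)$, to
\begin{equation*}
N\sqrt{(N-\tau)^2 - 4} \leq (N-\tau)\sqrt{N^2-4}.
\end{equation*}
Both sides are non-negative (using $N-\tau \geq 4 > 2$, which is what the hypothesis $N \geq 4$ together with $\tau \leq 4$ really encodes in this setting), so we may square. Squaring and expanding, the $N^2(N-\tau)^2$ terms cancel and we are left with $(N-\tau)^2 \leq N^2$, which is trivial since $0 \leq \tau < N$. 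The only subtlety is checking the implicit hypothesis that $N - \tau \geq 2$ so that $q(N-\tau)$ is well-defined and the square-root is real; this is precisely the setting in which Lemma \ref{lem:variousbounds} will be invoked. No step poses a genuine difficulty — the content of the lemma is really just the observation that the rationalized form of $q$ converts all three statements into one-line algebraic facts.
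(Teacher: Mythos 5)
Your argument is correct, but it takes a genuinely different route from the paper for parts (1) and (2). The paper observes that $q'(t) = -q(t)/\sqrt{t^{2}-4}$, deduces that the single auxiliary function $f(t) = tq(t)$ is strictly decreasing with limit $1$ at infinity, and then reads off both inequalities at once: monotonicity applied to $N > N-\tau$ gives (1), and $f(t) > \lim_{s\to\infty} f(s) = 1$ gives (2). Your approach instead rationalizes $q(t) = 2/(t+\sqrt{t^{2}-4})$ and verifies each inequality by direct algebra --- for (2) a one-line comparison, for (1) a cross-multiplication and squaring that collapses to $(N-\tau)^{2}\leqslant N^{2}$. Both are sound; the paper's version is more economical because a single derivative computation does double duty (and the same derivative formula is reused elsewhere in Section 3), while yours is more elementary since it avoids calculus and makes the positivity of each step visible before squaring. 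Your remark that one should keep track of $N-\tau > 2$ so that $q(N-\tau)$ and the square root are real is a good point that the paper leaves implicit; note however that from the hypotheses actually in force in Theorem \ref{thm:randomrotation} (namely $N\geqslant \tau + C(\tau)$ with $\tau\leqslant 4$) one only gets $N-\tau\geqslant C(\tau)$, which for $\tau$ near $4$ is slightly less than $4$, so the safe claim is $N-\tau > 2$ rather than $N-\tau\geqslant 4$; the squaring step still goes through since both sides are then nonnegative and only $(N-\tau)^{2}\leqslant N^{2}$ is needed. Part (3) is handled identically in both proofs.
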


\begin{proof}
Consider the function $f : t\mapsto tq(t)$. Noticing that
\begin{equation*}
q'(t) = \frac{1}{2} - \frac{1}{2}\frac{t}{\sqrt{t^{2} - 4}} = \frac{\sqrt{t^{2} - 4} - t}{2\sqrt{t^{2}-4}} = \frac{-q(t)}{\sqrt{t^{2}-4}},
\end{equation*}
we see that
\begin{equation*}
f'(t) = q(t) + tq'(t) = \left(1-\frac{t}{\sqrt{t^{2}-4}}\right)q(t) < 0
\end{equation*}
so that $f$ is decreasing. Applying this to $N > N-\tau$ yields the first inequality while $f(t) > \lim_{+\infty} f(t) = 1$ yields the second one. The third equality follows from the well-known bound $\ln(1-x) \leqslant -x$ valid for any $0 \leqslant x < 1$.
\end{proof}

In the proof of Theorem \ref{thm:estimategreaterthan2} we saw that the total variation distance can be bounded by
\begin{equation*}
\frac{Nq(N-\tau)^{k}}{2\sqrt{q(N-\tau)^{2k} - q(N)^{2k-1}}}\left(\frac{1}{Nq(N-\tau)(1-q(N-\tau)^{2})}\right)^{k}
\end{equation*}
and the inequalities above will enable us to bound the first part of this expression. For the second part, we need to study $Nq(N-\tau)(1-q(N-\tau)^{2})$. Note that it is not even clear that this is greater than one and in fact, for $\tau = 0$ it equals $1-q(N)^{4} < 1$. However, as soon as $\tau > 0$, assuming that $N$ is large enough everything will work. To show this we will first prove two computational lemmata.

\begin{lem}\label{lem:threefunctions}
Consider the following functions defined for $t > 2$ :
\begin{equation*}
f(t) = \frac{\tau^{2}}{2t(t+\tau)^{2}} \text{ and } g(t) = \frac{16}{5}\frac{1}{t^{3}(t^{2}-4)}
\end{equation*}
and set
\begin{equation*}
C(\tau) = \frac{2}{\tau\sqrt{5}}(2+\sqrt{2+9\tau^{2}})
\end{equation*}
Then, $f(t) \geqslant g(t)$ as soon as $t\geqslant C(\tau)$.
\end{lem}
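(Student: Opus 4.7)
Clearing denominators (all quantities being positive for $t>2$), the claim $f(t)\geq g(t)$ is equivalent to the polynomial inequality $5\tau^{2}t^{2}(t^{2}-4)\geq 32(t+\tau)^{2}$, which upon taking square roots reads
$$\sqrt{5}\,\tau\, t\sqrt{t^{2}-4}\geq 4\sqrt{2}\,(t+\tau).$$

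To remove the square root, I would use the elementary lower bound $t\sqrt{t^{2}-4}\geq t^{2}-4$, valid for all $t\geq 2$ (squaring reduces it to the obvious $t^{2}\geq t^{2}-4$). This replaces the target by the stronger sufficient quadratic condition $\sqrt{5}\,\tau(t^{2}-4)\geq 4\sqrt{2}\,(t+\tau)$, i.e.\
$$\sqrt{5}\,\tau\, t^{2}-4\sqrt{2}\,t - 4(\sqrt{2}+\sqrt{5})\tau\geq 0.$$
Computing the discriminant gives $\Delta=32+16(5+\sqrt{10})\tau^{2}=16\bigl(2+(5+\sqrt{10})\tau^{2}\bigr)$, and the positive root is
$$t_{\ast}=\frac{2\bigl(\sqrt{2}+\sqrt{2+(5+\sqrt{10})\tau^{2}}\bigr)}{\sqrt{5}\,\tau}.$$

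It then only remains to verify $C(\tau)\geq t_{\ast}$, which amounts to the two immediate inequalities $\sqrt{2}\leq 2$ and $5+\sqrt{10}\leq 9$ (since $\sqrt{10}<4$). One should also check, in order to apply $t\sqrt{t^{2}-4}\geq t^{2}-4$, that $C(\tau)\geq 2$ for all $\tau>0$; this is routine from the explicit formula for $C(\tau)$, since the minimum of $C(\tau)$ over $\tau\in(0,\infty)$ stays safely above $2$.

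The main (rather mild) obstacle is calibrating the intermediate bound: it must be loose enough to reduce the problem to a closed-form quadratic, yet tight enough that the resulting threshold is dominated by $C(\tau)$. The choice $t\sqrt{t^{2}-4}\geq t^{2}-4$ does exactly this, with the margin being precisely the two inequalities $\sqrt{2}<2$ and $5+\sqrt{10}<9$.
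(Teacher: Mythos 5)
Your proposal matches the paper's proof essentially line for line: clear denominators to get $5\tau^{2}t^{2}(t^{2}-4)\geqslant 32(t+\tau)^{2}$, bound the left side below by $5\tau^{2}(t^{2}-4)^{2}$ via $t^{2}\geqslant t^{2}-4$, reduce to the quadratic $\sqrt{5}\tau t^{2}-4\sqrt{2}t-4\tau(\sqrt{5}+\sqrt{2})\geqslant 0$, bound the discriminant using $5+\sqrt{10}\leqslant 9$, and compare the resulting root with $C(\tau)$. The final comparison in the paper is stated a bit obliquely (``$2/3\geqslant\sqrt{2/5}$''), but it amounts to exactly your cleaner pair of observations $\sqrt{2}\leqslant 2$ and $5+\sqrt{10}\leqslant 9$; your extra check that $C(\tau)\geqslant 2$ is harmless but not needed since $t>2$ is already hypothesized.
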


\begin{proof}
The inequality $f(t)\geqslant g(t)$ can be written as
\begin{equation}\label{eq:functionalinequality}
5\tau^{2}t^{2}(t^{2}-4)\geqslant 32(t+\tau)^{2}.
\end{equation}
Because $t^{2}\geqslant t^{2}-4$, the left-hand side is greater than $[\sqrt{5}\tau(t^{2}-4)]^{2}$ and \eqref{eq:functionalinequality} will be satisfied as soon as $\sqrt{5}\tau(t^{2}-4)\geqslant 4\sqrt{2}(t+\tau)$, which amounts to the quadratic inequality
\begin{equation*}
\sqrt{5}\tau t^{2} - 4\sqrt{2}t - 4\tau(\sqrt{5} + \sqrt{2})\geqslant 0.
\end{equation*}
The discriminant is $32 + 16\tau^{2}(5+\sqrt{10}) \leqslant 32 + 16\times 9\times \tau^{2}$ so that \eqref{eq:functionalinequality} is satisfied as soon as
\begin{equation*}
t\geqslant \frac{2\sqrt{2}}{\sqrt{5}\tau} + \frac{2}{\sqrt{5}\tau}\sqrt{2+9\tau^{2}}.
\end{equation*}
The result now follows from the observation that $C(\tau)$ is greater than the right-hand side because $2/3\geqslant \sqrt{2/5}$.
\end{proof}

With this in hand we can prove the main inequality that we need.

\begin{lem}\label{lem:hardlowerbound}
Let $0 < \tau \leqslant 4$. Then, for any $N\geqslant \tau + C(\tau)$,
\begin{equation*}
q(N-\tau)(1-q(N-\tau)^{2}) \geqslant \frac{e^{\tau/N}}{N}.
\end{equation*}
\end{lem}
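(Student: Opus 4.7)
The plan is to reduce the inequality to a purely algebraic statement about $q(N-\tau)$ and then invoke Lemma~\ref{lem:threefunctions}. The key first move is the identity
\[
q(s)(1-q(s)^2) = \frac{(1+q(s)^2)(1-q(s)^2)}{s} = \frac{1-q(s)^4}{s},
\]
which follows from $sq(s) = 1 + q(s)^2$ (a consequence of $q(s)+q(s)^{-1}=s$). Setting $s = N-\tau$, the target inequality is equivalent to
\[
1 - q(N-\tau)^4 \geq \left(1-\frac{\tau}{N}\right)e^{\tau/N}.
\]

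Next, I would control the right-hand side via the Taylor identity $(1-x)e^x = 1-\sum_{n\geq 2}(n-1)x^n/n!$, which yields $(1-x)e^x \leq 1 - x^2/2$ for every $x \geq 0$. Applied with $x = \tau/N$ (valid since $C(\tau)>0$ forces $N>\tau$), this further reduces the problem to proving
\[
q(N-\tau)^4 \leq \frac{\tau^2}{2N^2}.
\]

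Third, to upper bound $q(N-\tau)$ I would use the closed formula $q(s) = 2/(s+\sqrt{s^2-4})$ together with $s+\sqrt{s^2-4}\geq 2\sqrt{s^2-4}$ (valid for $s\geq 2$ since it is equivalent to $s\geq \sqrt{s^2-4}$); this yields $q(s)^2\leq 1/(s^2-4)$, hence $q(N-\tau)^4\leq 1/((N-\tau)^2-4)^2$. Thus it suffices to establish the algebraic inequality
\[
\tau\bigl((N-\tau)^2-4\bigr)\geq \sqrt{2}\,N.
\]

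Finally, this last inequality follows from Lemma~\ref{lem:threefunctions}. Indeed, the proof of that lemma establishes, for $t\geq C(\tau)$, the stronger intermediate bound $\sqrt{5}\,\tau(t^2-4)\geq 4\sqrt{2}(t+\tau)$; setting $t=N-\tau$ (so that $t+\tau=N$) and dividing by $\sqrt{5}$ gives $\tau((N-\tau)^2-4)\geq (4\sqrt{2}/\sqrt{5})N$, which is strictly stronger than what is needed since $4/\sqrt{5}>1$. The main obstacle is to design the chain of sufficient conditions carefully enough that the invocation of Lemma~\ref{lem:threefunctions} actually closes the argument; the factor $4/\sqrt{5}$ comfortably absorbs the losses in the Taylor bound $(1-x)e^x\leq 1-x^2/2$ and in $q(s)^2\leq 1/(s^2-4)$.
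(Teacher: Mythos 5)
Your argument is correct and takes a genuinely different route from the paper, and a cleaner one at that. The paper also starts from the relation $1+q(t)^{2}=tq(t)$, but uses it to derive the expansion $q(t)(1-q(t)^{2}) = 2q(t)-t^{2}q(t)+t$, then develops a full power series for $q(t)$ from the binomial series of $\sqrt{1-4/t^{2}}$, computes the coefficients $b_{n}$, bounds them by $1/10$, and separately Taylor-expands $e^{\tau/(t+\tau)}/(t+\tau)$ to reduce everything to $f(t)\geqslant g(t)$ (Lemma~\ref{lem:threefunctions}). Your route is much shorter: the identity $q(s)(1-q(s)^{2}) = (1-q(s)^{4})/s$ immediately converts the claim into $1-q(N-\tau)^{4}\geqslant(1-\tau/N)e^{\tau/N}$; the elementary bound $(1-x)e^{x}\leqslant 1-x^{2}/2$ for $x\geqslant 0$ (all terms in $(1-x)e^{x}=1-\sum_{n\geqslant 2}\tfrac{n-1}{n!}x^{n}$ beyond $n=2$ are nonpositive) and the crude estimate $q(s)^{2}\leqslant 1/(s^{2}-4)$ (valid since $N-\tau\geqslant C(\tau)>2$) reduce the claim to $\tau((N-\tau)^{2}-4)\geqslant\sqrt{2}N$; and this follows, with plenty of slack, from the intermediate inequality $\sqrt{5}\,\tau(t^{2}-4)\geqslant 4\sqrt{2}(t+\tau)$ established in the proof of Lemma~\ref{lem:threefunctions}, evaluated at $t=N-\tau$. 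You correctly note the factor $4/\sqrt{5}>1$ absorbs the losses. This bypasses the binomial-coefficient manipulations entirely and makes the role of $C(\tau)$ more transparent; the paper's more elaborate series computation does not appear to yield a tighter constant in the end, so your simplification is a genuine improvement in readability with no loss of strength.
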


\begin{proof}
Let us set $a_{0} = 1$, $a_{1} = 1/2$ and for $n\geqslant 2$,
\begin{equation*}
a_{n} = \frac{1\times 3\times\cdots\times(2n-3)}{2\times 4\times\cdots\times 2n} = \frac{(2n-3)!}{2^{n-2}(n-2)!\times 2^{n}(n!)} = \frac{1}{n4^{n-1}}{\binom{2n-3}{n-1}}
\end{equation*}
so that $\sqrt{1+x} = \sum_{n}(-1)^{n+1}a_{n}x^{n}$. It follows that
\begin{equation*}
q(t) = \frac{t}{2}\left(1 - \sqrt{1-\frac{4}{t^{2}}}\right) = \frac{1}{t} + \sum_{n=2}^{+\infty}a_{n}\left(\frac{2}{t}\right)^{2n-1}.
\end{equation*}
Moreover, using twice the identity $1+q(t)^{2} = tq(t)$, we see that
\begin{equation*}
q(t)(1-q(t)^{2}) = q(t)(2-tq(t)) = 2q(t) - t(tq(t) - 1) = 2q(t) - t^{2}q(t) + t
\end{equation*}
and we deduce from this a series expansion, namely (setting $t = N-\tau$)
\begin{align*}
q(N-\tau)(1-q(N-\tau)^{2}) & = \frac{2}{t} + \sum_{n=2}^{+\infty}2a_{n}\left(\frac{2}{t}\right)^{2n-1} - t - \sum_{n=2}^{+\infty}a_{n}\frac{2^{2n-1}}{t^{2n-3}} + t \\
& = \frac{1}{t} + \sum_{n=2}^{+\infty}(2a_{n} - 4a_{n+1})\left(\frac{2}{t}\right)^{2n-1} \\
& = \frac{1}{t} - \sum_{n=2}^{+\infty}b_{n}\left(\frac{2}{t}\right)^{2n-1}
\end{align*}
with $b_{n} = -2(a_{n} - 2a_{n+1}) = 2a_{n}(n-2)/(n+1) > 0$. We have to find an upper bound for the sum in this expression. Using the fact that $\binom{a}{b}\leqslant 2^{a}$, we see that for $n\geqslant 4$
\begin{equation*}
b_{n}\leqslant 2\frac{n-2}{n+1}\frac{1}{n4^{n-1}}2^{2n-3} = \frac{n-2}{n(n+1)} \leqslant \frac{1}{10}
\end{equation*}
where the last inequality comes from the fact that the sequence $(n-2)/n(n+1)$ is decreasing for $n\geqslant 4$. Since $b_{2}=0$ and $b_{3} = 1/32 < 1/10$, we have for all $t > 2$
\begin{align*}
\sum_{n=2}^{+\infty}b_{n}\left(\frac{2}{t}\right)^{2n-1} \leqslant \frac{1}{10}\sum_{n=3}^{+\infty}\left(\frac{2}{t}\right)^{2n-1} = \frac{16}{5}\frac{1}{t^{3}(t^{2}-4)} = g(t).
\end{align*}
Moreover,
\begin{align*}
\frac{1}{t} - \frac{e^{\tau/(t+\tau)}}{t+\tau} & = \frac{1}{t} - \frac{1}{t+\tau} - \frac{\tau}{(t+\tau)^{2}} - \frac{\tau^{2}}{2(t+\tau)^{3}} - \sum_{k=3}^{+\infty}\frac{\tau^{k}}{k!(t+\tau)^{k+1}} \\
& \geqslant \frac{\tau^{2}}{(t+\tau)^{2}}\left(\frac{1}{t} - \frac{1}{2(t+\tau)}\right) - \frac{\tau^{3}}{(t+\tau)^{4}}\sum_{k=3}^{+\infty}\frac{1}{k!} \\
& = \frac{\tau^{2}}{(t+\tau)^{2}}\left(\frac{1}{2t} + \frac{\tau}{2t(t+\tau)}\right) - \frac{\tau^{3}}{(t+\tau)^{4}}\sum_{k=3}^{+\infty}\frac{1}{k!} \\
&  \geqslant \frac{\tau^{2}}{2t(t+\tau)^{2}} + \frac{\tau^{3}}{2t(t+\tau)^{3}} - \left(e-\frac{5}{2}\right)\frac{\tau^{3}}{(t+\tau)^{4}} \\
& \geqslant \frac{\tau^{2}}{2t(t+\tau)^{2}} = f(t)
\end{align*}
Summing up, by Lemma \ref{lem:threefunctions},
\begin{equation}\label{eq:hardlowerbound}
q(t)(1-q(t)^{2}) - \frac{e^{\tau/(t+\tau)}}{t+\tau} \geqslant f(t) - g(t)\geqslant 0
\end{equation}
as soon as $t\geqslant C(\tau)$, i.e. $N\geqslant \tau + C(\tau)$
\end{proof}

\begin{rem}
The condition $N\geqslant \tau + C(\tau)$ could probably be sharpened by considering better bounds for the binomial coefficients and improving Lemma \ref{lem:threefunctions}. However, it is already quite good since for instance for $\tau = 4$ it yields $N\geqslant 8$ and for $\tau=2$ it yields $N\geqslant 6$.
\end{rem}

We are now ready to establish the upper bound for the cut-off phenomenon announced in the beginning of this section, which is the main result of this work.

\begin{thm}\label{thm:randomrotation}
The random walk associated to $0 < \theta\leqslant \pi$ has an upper cut-off at
\begin{equation*}
\frac{N\ln(N)}{2(1-\cos(\theta))}
\end{equation*}
steps in the following sense : if $N\geqslant \tau + C(\tau)$, then for any $c_{0} > 0$ and all $c\geqslant c_{0}$, after
\begin{equation*}
k = \frac{N\ln(N)}{2(1-\cos(\theta))} + cN
\end{equation*}
steps we have
\begin{equation*}
\|\varphi_{R_{\theta}}^{\ast k} - h\|_{TV} \leqslant \frac{1}{2\sqrt{1-e^{-4c_{0}(1-\cos(\theta))}}}e^{-2c(1-\cos(\theta))}.
\end{equation*}
\end{thm}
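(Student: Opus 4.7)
My plan is to specialize to $t = N-\tau$ the intermediate estimate derived in the proof of Theorem \ref{thm:estimategreaterthan2} and to bound its two main factors separately. Before $k$ is replaced by $k_0$ in that proof, the bound obtained reads
\[
A_k(N-\tau) \leq \frac{N^2}{\left(Nq(N-\tau)(1-q(N-\tau)^2)\right)^{2k}} \cdot \frac{1}{1 - q(N)^{2k-2}/q(N-\tau)^{2k}},
\]
so by Lemma \ref{lem:upperbound} it will suffice to bound the right-hand side by $e^{-2c\tau}/(1-e^{-2c_0\tau})$, then divide by four and take square roots, using $\tau = 2(1-\cos\theta)$.

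The first factor will be handled directly by Lemma \ref{lem:hardlowerbound}, whose hypothesis $N\geq \tau + C(\tau)$ is exactly the one assumed in the theorem. That lemma yields $Nq(N-\tau)(1-q(N-\tau)^2)\geq e^{\tau/N}$, hence $\left(Nq(N-\tau)(1-q(N-\tau)^2)\right)^{-2k}\leq e^{-2k\tau/N}$. Substituting $k = N\ln(N)/\tau + cN$ gives exactly $e^{-2\ln(N)-2c\tau} = e^{-2c\tau}/N^2$, which cancels the outer $N^2$ in the intermediate estimate and produces the desired exponential rate.

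For the second factor I will bound $q(N)^{2k-2}/q(N-\tau)^{2k}$ uniformly on $k \geq k_1 + c_0 N$. Writing
\[
\frac{q(N)^{2k-2}}{q(N-\tau)^{2k}} = \left(\frac{q(N)}{q(N-\tau)}\right)^{2k}\cdot\frac{1}{q(N)^2}
\]
and invoking items (1), (3) and (2) of Lemma \ref{lem:variousbounds} in that order gives
\[
\frac{q(N)^{2k-2}}{q(N-\tau)^{2k}} \leq \left(1-\frac{\tau}{N}\right)^{2k} N^2 \leq e^{-2k\tau/N}\,N^2,
\]
which equals $e^{-2c_0\tau}$ at $k = k_1 + c_0 N$. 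Since the expression is decreasing in $k$, the uniform bound $\left(1-q(N)^{2k-2}/q(N-\tau)^{2k}\right)^{-1}\leq(1-e^{-2c_0\tau})^{-1}$ holds throughout the range $k\geq k_1 + c_0 N$.

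Multiplying the two estimates and invoking Lemma \ref{lem:upperbound} produces exactly the claimed bound. The genuine obstacle is concentrated in Lemma \ref{lem:hardlowerbound}, which packages the delicate power-series comparison needed to sharpen the naive inequality for $Nq(N-\tau)(1-q(N-\tau)^2)$ into the clean lower bound $e^{\tau/N}$; the remainder of the argument is asymptotic bookkeeping in which the logarithmic piece of the threshold $k_1 = N\ln(N)/\tau$ provides the $N^{-2}$ factor that cancels, in each of the two factors, an $N^2$ appearing along the way.
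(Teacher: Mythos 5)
Your proof is correct and follows essentially the same route as the paper's: you split the bound from Theorem \ref{thm:estimategreaterthan2} into the same two parts (the geometric-series tail, controlled by Lemma \ref{lem:variousbounds}, and the $(Nq(N-\tau)(1-q(N-\tau)^2))^{-2k}$ factor, controlled by Lemma \ref{lem:hardlowerbound}), and the only difference is presentational — you work directly with $A_k$ rather than the factored form $B_k(N)\cdot B_k'(N)$ used in the paper, and you make the monotonicity in $k$ explicit where the paper simply passes from a bound at general $c$ to $c\geqslant c_0$.
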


\begin{proof}
Let us set $k_{1} = N\ln(N)/\tau$,
\begin{equation*}
B_{k}(N) = \frac{Nq(N-\tau)^{k}}{2\sqrt{q(N-\tau)^{2k} - q(N)^{2k-2}}} \text{ and } B_{k}'(N) = \left(\frac{1}{Nq(N-\tau)(1-q(N-\tau)^{2})}\right)^{k}.
\end{equation*}
We will bound each part separately and then combine them to get the desired estimate. First, using Lemma \ref{lem:variousbounds} we see that
\begin{equation*}
N\ln\left(\frac{q(N)}{q(N-\tau)}\right) \leqslant N\ln\left(\frac{N-\tau}{N}\right) \leqslant -\tau,
\end{equation*}
so that
\begin{equation*}
(2k_{1} + 2cN)\ln\left(\frac{q(N)}{q(N-\tau)}\right) - 2\ln(q(N)) \leqslant - 2\tau c
\end{equation*}
and it then follows that
\begin{align*}
B_{k}(N)\leqslant \frac{N}{2\sqrt{1-e^{-2\tau c}}} \leqslant \frac{N}{2\sqrt{1-e^{-2\tau c_{0}}}}.
\end{align*}
Turning now to $B_{k}'(N)$, we have by Lemma \ref{lem:hardlowerbound}
\begin{align*}
(k_{1} + cN)\left(-\ln(Nq(N-\tau)(1-q(N-\tau)^{2}))\right) & \leqslant \ln(N) - \tau c
\end{align*}
so that
\begin{equation*}
B_{k}'(N) \leqslant \frac{1}{N}e^{-\tau c}
\end{equation*}
Gathering both inequalities eventually yields the announced estimate.
\end{proof}

\begin{rem}
The fact that the statement is not uniform in $c$ may be disappointing, but we cannot do better with the upper bound lemma in the sense that for $k = k_{1} + cN$, all the inequalities we used become equivalences as $N$ goes to infinity, so that $e^{c\tau}A_{k}(N-\tau)$ is not bounded above uniformly in $c$. Note that the result could also be stated in the following way : for any $\epsilon > 0$, there is a uniform (in $c$) upper cut-off at $k_{1}(1+\epsilon)$ steps.
\end{rem}

\subsubsection{Lower bound}

To have a genuine cut-off phenomenon, we must now show that if $k = N\ln(N)/\tau - cN$, then the total variation distance is bounded below by something which is almost constant. Usually, such bounds are proven using the Chebyshev inequality. Note that if $x$ is a self-adjoint element in a von Neumann algebra, then it generates an abelian subalgebra which is therefore isomorphic to $L^{\infty}(X)$ for some space $X$. Then, any state on the original algebra restricts to a measure on $X$ so that it makes sense to apply the Chebyshev inequality to $x$. In our case, we will apply it to $x = \chi_{1}$, so that we have to estimate the expectation and variance of this element under the state $\varphi_{R_{\theta}}^{\ast k}$. To keep things clear, we first give these estimates in a lemma.

\begin{lem}\label{lem:lowerboundquantumrotation}
For $k = N\ln(N)/\tau - cN$ and $N\geqslant 5$, we have
\begin{equation*}
\varphi_{R_{\theta}}^{\ast k}(\chi_{1}) \geqslant \frac{e^{c\tau}}{5} \text{ and } \var_{\varphi_{R_{\theta}}^{\ast k}}(\chi_{1}) \leqslant 1.
\end{equation*}
\end{lem}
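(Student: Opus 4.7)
The plan is to exploit the identity $\varphi_t^{\ast k}(\chi_n) = u_n(t)^k/u_n(N)^{k-1}$, which follows from the centrality of $\varphi_t$ (so $\varphi_t^{\ast k}(\alpha) = \varphi_t(\alpha)^k = (u_n(t)/d_n)^k$) and the identity $\chi_n = \sum_i u^n_{ii}$. Specializing to $t = N-\tau$ and $n\in\{1,2\}$ reduces both estimates to elementary manipulations involving $\log(1-\tau/N)$ once the appropriate inequalities are inserted.

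For the expectation, the formula gives $\varphi_{R_\theta}^{\ast k}(\chi_1) = N(1-\tau/N)^k$ since $u_1(x) = x$. Taking logarithms and substituting $k = N\log N/\tau - cN$ causes the dominant $\log N$ to cancel. The remainder can be controlled via
\begin{equation*}
-\log(1-\tau/N) \;=\; \sum_{n=1}^{+\infty}\frac{(\tau/N)^n}{n} \;\leqslant\; \frac{\tau}{N} + \frac{\tau^2}{2N(N-\tau)},
\end{equation*}
obtained by replacing $1/n$ with $1/2$ for $n\geqslant 2$ and summing the resulting geometric series. This yields the lower bound
\begin{equation*}
\log \varphi_{R_\theta}^{\ast k}(\chi_1) \;\geqslant\; c\tau - \frac{\tau\log N}{2(N-\tau)},
\end{equation*}
and under the assumption $N\geqslant 5$ (with $\tau\leqslant 4$ from $\theta\leqslant \pi$), the correction term can be absorbed in $\log 5$, giving the stated bound $e^{c\tau}/5$.

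The variance is cleaner. The character recursion \eqref{eq:charactersorthogonal} at $n = 1$ reads $\chi_1^2 = \chi_2 + \chi_0 = \chi_2 + 1$, hence
\begin{equation*}
\var_{\varphi_{R_\theta}^{\ast k}}(\chi_1) \;=\; 1 + \varphi_{R_\theta}^{\ast k}(\chi_2) - \varphi_{R_\theta}^{\ast k}(\chi_1)^2,
\end{equation*}
so it suffices to show $\varphi_{R_\theta}^{\ast k}(\chi_2) \leqslant \varphi_{R_\theta}^{\ast k}(\chi_1)^2$. Using $u_2(x) = x^2-1$, the required inequality is
\begin{equation*}
\frac{((N-\tau)^2-1)^k}{(N^2-1)^{k-1}} \;\leqslant\; \frac{(N-\tau)^{2k}}{N^{2k-2}},
\end{equation*}
which rearranges to $(1-1/N^2)^{k-1} \geqslant (1-1/(N-\tau)^2)^k$. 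This holds since $(N-\tau)^2 < N^2$ makes each factor on the left strictly larger than each factor on the right, leaving even an extra margin of $1-1/N^2$.

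The main obstacle is the calibration in the mean estimate: after cancellation the whole question reduces to whether the second-order contribution $\tau\log N/(2(N-\tau))$ is bounded by $\log 5$, and tightening the elementary bound on $-\log(1-\tau/N)$ beyond the naive $1/n\leqslant 1/2$ replacement would be needed to cover very small values of $N$ relative to $\tau$. Since this corner case lies on the boundary of the cut-off regime $t>2$ anyway, the stated constant $1/5$ is essentially optimal for what the expansion delivers.
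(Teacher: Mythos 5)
Your variance argument is essentially the paper's: the identity $\chi_1^2 = \chi_2 + 1$ reduces the claim to $\varphi^{\ast k}_{R_\theta}(\chi_2)\leqslant \varphi^{\ast k}_{R_\theta}(\chi_1)^2$, which after the rearrangement to $(1-N^{-2})^{k-1}\geqslant (1-(N-\tau)^{-2})^{k}$ holds because each factor on the left is larger and there is one fewer of them. That part is correct and matches the paper exactly.

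The expectation part, however, has a genuine gap. The paper cites the explicit numerical inequality from Rosenthal, namely $N(1-\tau/N)^{N\ln(N)/\tau}\geqslant 1/5$ for $N\geqslant 5$ and $\vert\tau\vert\leqslant 4$, and combines it with the elementary bound $\ln(1-\tau/N)\leqslant -\tau/N$ to extract the factor $e^{c\tau}$. You instead try to \emph{derive} the $1/5$ constant by Taylor-expanding $-\ln(1-\tau/N)$ and bounding the tail by a geometric series. Your expansion correctly produces the correction term $\tau\ln(N)/2(N-\tau)$, but this term is not bounded by $\ln 5$ over the whole claimed range: for instance $N=5$, $\tau=4$ gives $\tau\ln(N)/2(N-\tau) = 2\ln 5 > \ln 5$, and $N=6$, $\tau=4$ gives $\tau\ln(N)/2(N-\tau) = \ln 6 > \ln 5$. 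You acknowledge this at the end, but "the stated constant is essentially optimal for what the expansion delivers" is not a proof of the lemma as stated; it is an admission that the crude tail bound loses too much near the boundary. To close the gap you would need either a sharper bound on the tail (e.g.\ keeping the $x^2/2$ term exactly and only geometrizing from $n\geqslant 3$, then re-checking the small-$N$ cases numerically), or simply do what the paper does and invoke the already-verified inequality from Rosenthal's proof. As written, the lemma is not established in the full range $N\geqslant 5$.
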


\begin{proof}
As explained in the proof of \cite[Thm 2.1]{rosenthal1994random}, for any $N\geqslant 5$ and any $-4 \leqslant \tau \leqslant 4$,
\begin{equation*}
N\left(1 - \frac{\tau}{N}\right)^{N\ln(N)/\tau}\geqslant \frac{1}{5}
\end{equation*}
so that
\begin{equation*}
\varphi_{R_{\theta}}^{\ast k}(\chi_{1}) = \frac{(N-\tau)^{k}}{N^{k-1}} = N\left(1-\frac{\tau}{N}\right)^{N\ln(N)/\tau}\left(1-\frac{\tau}{N}\right)^{-cN}\geqslant \frac{e^{c\tau}}{5}.
\end{equation*}
As for the second inequality, first note that $\chi_{1}^{2} = \chi_{2} + 1$ so that
\begin{align*}
\var_{\varphi_{R_{\theta}}^{\ast k}}(\chi_{1}) & = 1 + \frac{((N-\tau)^{2} - 1)^{k}}{(N^{2}-1)^{k-1}} - \left(\frac{(N-\tau)^{k}}{N^{k-1}}\right)^{2} \\
& \leqslant 1 + \frac{(N-\tau)^{2k}}{N^{2k-2}}\left(\frac{\left(1-(N-\tau)^{-2}\right)^{k}}{(1-N^{-2})^{k-1}} - 1\right) \\
& \leqslant 1.
\end{align*}
\end{proof}

We are now ready for the proof of the lower bound.

\begin{prop}\label{prop:lowercutoff}
The random walk associated to $0 < \theta\leqslant \pi$ has a lower cut-off at
\begin{equation*}
\frac{N\ln(N)}{2(1-\cos(\theta))}
\end{equation*}
steps in the following sense : for any $c > 0$, at
\begin{equation*}
k = \frac{N\ln(N)}{2(1-\cos(\theta))} - cN
\end{equation*}
steps we have
\begin{equation*}
\|\varphi_{R_{\theta}}^{\ast k} - h\|_{TV} \geqslant 1-200e^{-2c\tau}
\end{equation*}
\end{prop}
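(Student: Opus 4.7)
The plan is to apply a quantum version of Chebyshev's inequality to the self-adjoint element $\chi_1 \in L^\infty(O_N^+)$, using as test projection a spectral projection of $\chi_1$. The preceding Lemma \ref{lem:lowerboundquantumrotation} already gives the mean and variance of $\chi_1$ under $\varphi_{R_\theta}^{\ast k}$; all that is needed under the Haar state is $h(\chi_1)=0$ and $h(\chi_1^2)=h(\chi_2+1)=1$ (using the recursion $\chi_1^2 = \chi_2+\chi_0$ and orthogonality of characters), so $\var_h(\chi_1)=1$ as well.

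Concretely, I would set the threshold $t = e^{c\tau}/10$ and take $p = \mathbf{1}_{[t,+\infty)}(\chi_1) \in \mathcal{P}(L^\infty(O_N^+))$ through functional calculus applied inside the abelian von Neumann subalgebra generated by $\chi_1$. On this abelian subalgebra both $\varphi_{R_\theta}^{\ast k}$ and $h$ restrict to genuine probability measures, so classical Chebyshev applies. Writing $\mu = \varphi_{R_\theta}^{\ast k}(\chi_1) \geqslant e^{c\tau}/5$, Lemma \ref{lem:lowerboundquantumrotation} yields
\begin{equation*}
\varphi_{R_\theta}^{\ast k}(1-p) \;=\; \varphi_{R_\theta}^{\ast k}(\chi_1 < t) \;\leqslant\; \frac{\var_{\varphi_{R_\theta}^{\ast k}}(\chi_1)}{(\mu-t)^2} \;\leqslant\; \frac{1}{(e^{c\tau}/10)^2} \;=\; 100\,e^{-2c\tau},
\end{equation*}
while Markov-Chebyshev under $h$ gives
\begin{equation*}
h(p) \;\leqslant\; h(|\chi_1|\geqslant t) \;\leqslant\; \frac{\var_h(\chi_1)}{t^2} \;=\; \frac{1}{(e^{c\tau}/10)^2} \;=\; 100\,e^{-2c\tau}.
\end{equation*}

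Combining the two estimates,
\begin{equation*}
\|\varphi_{R_\theta}^{\ast k} - h\|_{TV} \;\geqslant\; \varphi_{R_\theta}^{\ast k}(p) - h(p) \;\geqslant\; (1-100e^{-2c\tau}) - 100e^{-2c\tau} \;=\; 1 - 200\,e^{-2c\tau},
\end{equation*}
which is the desired bound. There is no serious obstacle: the only subtle point is the legitimacy of using Chebyshev in the noncommutative framework, which is handled by restricting both states to the (abelian) von Neumann algebra generated by the self-adjoint $\chi_1$, exactly as was recalled before the statement. The quantitative heart of the argument is entirely packaged into Lemma \ref{lem:lowerboundquantumrotation}, whose estimates match precisely what Chebyshev needs to push the total variation arbitrarily close to one.
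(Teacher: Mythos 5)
Your proof is correct and follows essentially the same approach as the paper: Chebyshev's inequality applied to $\chi_1$ inside the abelian von Neumann subalgebra it generates, combined with the mean and variance estimates of Lemma \ref{lem:lowerboundquantumrotation}. The only (cosmetic) difference is the choice of test projection --- you use $\mathbf{1}_{[e^{c\tau}/10,\infty)}(\chi_1)$ where the paper uses its near-complement $\mathbf{1}_{[0,e^{c\tau}/10]}(|\chi_1|)$ --- leading to the same bound.
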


\begin{proof}
We will evaluate the states at projections obtained by functional calculus and use the original definition of the total variation distance. Let us denote by $\mathbf{1}_{S}$ the indicator function of a subset $S$ of $\R$. The proof relies on the same trick as in the classical case (see for instance \cite{rosenthal1994random}) using Chebyshev's inequality : noticing that because of the first inequality of Lemma \ref{lem:lowerboundquantumrotation}, $\mathbf{1}_{[0, e^{c\tau}/10]}(\vert \chi_{1}\vert)\leqslant \mathbf{1}_{[e^{c\tau}/10, +\infty]}(\vert\varphi_{R_{\theta}}^{\ast k}(\chi_{1}) - \chi_{1}\vert)$, we have
\begin{align*}
\varphi_{R_{\theta}}^{\ast k}\left(\mathbf{1}_{[0, e^{c\tau}/10]}(\vert\chi_{1}\vert)\right) & \leqslant \varphi_{R_{\theta}}^{\ast k}(\mathbf{1}_{[e^{c\tau}/10, +\infty]}\left(\vert\varphi_{R_{\theta}}^{\ast k}(\chi_{1}) - \chi_{1}\vert)\right) \\
& \leqslant 100e^{-2c\tau}\var_{\varphi_{R_{\theta}}^{\ast k}}(\chi_{1}) \\
& \leqslant 100e^{-2c\tau}
\end{align*}
On the other hand, since $h(\chi_{1}) = 0$ and $h(\chi_{1}^{2}) = 1$,
\begin{equation*}
h\left(\mathbf{1}_{[0, e^{c\tau}/10]}(\vert\chi_{1}\vert)\right) = 1 - h\left(\mathbf{1}_{]e^{c\tau}/10, +\infty[}(\vert\chi_{1}\vert)\right) \geqslant 1 - 100e^{-2c\tau}.
\end{equation*}
Gathering these facts, we get
\begin{equation*}
\|\varphi_{R_{\theta}}^{\ast k} - h\|_{TV} \geqslant \frac{1}{2}\left\vert h(\mathbf{1}_{[0, e^{c\tau}/10]}(\vert\chi_{1}\vert)) - \varphi_{R_{\theta}}^{\ast k}(\mathbf{1}_{[0, e^{c\tau}/10]}(\vert\chi_{1}\vert))\right\vert \geqslant 1-200e^{-2c\tau}
\end{equation*}
\end{proof}

The combination of Theorem \ref{thm:randomrotation} and Proposition \ref{prop:lowerbound} establishes the announced cut-off phenomenon. For $\theta = \pi$, J. Rosenthal proved \cite{rosenthal1994random} that $N\ln(N)/4$ steps suffice to get exponential convergence, in accordance with our result (for $N\geqslant 8$). For $\theta\neq \pi$, he could only show that at least $N\ln(N)/2(1-\cos(\theta))$ steps are required and the sufficiency was proved by B. Hough and Y. Jiang in \cite{hough2017cut}. One can also consider the random walk given by a random reflection since they form a conjugacy class. Noting that any reflection has trace $N-2$ the previous argument shows that for $N\geqslant 6$ there is a cut-off with parameter $N\ln(N)/2$, in accordance with the results of U. Porod in the classical case \cite{porod1996cut}. Note that we could also use the same computations to obtain a cut-off for $\varphi_{t}$ as soon as $t > 2$, but we decided to stick to random walks which are connected to important classical examples.

The reader may be surprised that our random walk is defined on an analogue of $O_{N}$ instead of $SO(N)$ since we are considering the conjugacy class of a matrix with determinant one. This comes from the fact that $O_{N}^{+}$ is in a sense connected as a compact quantum group so that it has no "$SO^{+}_{N}$" quantum subgroup (recall that matrices with opposite determinant are quantum conjugate if they have the same trace) and this allows the random walk to spread on the whole of $O_{N}^{+}$. There is another quantum group linked to $SO(N)$, which is the quantum group of trace-preserving automorphisms of the algebra $M_{N}(\C)$ of $N$ by $N$ matrices. We will see in Subsection \ref{subsec:quantumautomorphisms} that the uniform plane Kac walk on it has a cut-off with the same parameter as for $O_{N}^{+}$.

\subsection{Mixed rotations}\label{subsec:mixedrotations}

One may also consider random walks associated to states which are "mixed" instead of being pure. For instance, let $\nu$ be a probability measure on the circle $\mathbb{T}$, and set
\begin{equation*}
\varphi_{\nu}(x) = \int_{\mathbb{T}}\varphi_{R_{\theta}}(x)\dd\nu(\theta).
\end{equation*}
This defines a central state $\varphi_{\nu}$ on $O_{N}^{+}$ corresponding to a random walk where $\theta$ is chosen randomly according to $\nu$ and then $R_{\theta}$ is randomly conjugated. B. Hough and Y. Jiang obtained in \cite{hough2017cut} cut-off results for these random walks with the sole restriction that $\nu\neq\delta_{0}$. In our context, a stronger assumption will be needed, due to an analytic issue. Assume for instance $\nu(\{0\}) = p > 0$, then $\varphi_{\nu}$ can be written as
\begin{equation*}
\varphi_{\nu} = p\varphi_{N} + (1-p)\varphi_{\nu'}
\end{equation*}
where $\nu' = (1-p)^{-1}(\nu - p.\delta_{0})$. But $\varphi_{N}$ is a very particular map called the \emph{co-unit} and for a compact quantum group, the co-unit is bounded on $L^{\infty}(\G)$ if and only if $\G$ is \emph{co-amenable}. Since co-amenability is known to fail for $O_{N}^{+}$ by \cite[Cor 1]{banica1997groupe} and $\varphi_{N}^{\ast k} = \varphi_{N}$ for any $k$, we see that $\varphi_{\nu}^{\ast k}\geqslant p^{k}\varphi_{N}$ never extends to $L^{\infty}(O_{N}^{+})$ so that the total variation distance between $\varphi_{\nu}$ and $h$ is not defined.

This suggests to assume that $\nu(\{0\}) = 0$, but we were not able to prove a cut-off in this generality. However, if we assume that the support of $\nu$ is bounded away for $0$, then everything works. The proof closely follows that of B. Hough and Y. Jiang in \cite{hough2017cut} except for some computations, which we first treat separately.

\begin{lem}\label{lem:boundsformixedrotations}
For any $N\geqslant \tau + C(\tau)$ and any $0 < \tau\leqslant 4$,
\begin{equation*}
\varphi_{N-\tau}(n)^{N\ln(N)/\tau}\leqslant d_{n}^{-1}.
\end{equation*}
Moreover, for any $N\geqslant 3$ and any $\lambda > 0$,
\begin{equation*}
\sum_{n=1}^{+\infty}d_{n}^{-\lambda/\ln(N)} \leqslant \frac{e^{-\lambda/2}}{1-e^{-\lambda/2}}
\end{equation*}
\end{lem}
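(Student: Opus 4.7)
The plan is to handle the two inequalities separately, each by applying the asymptotic estimates developed earlier in this subsection.

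For the first inequality, I set $k = N\ln(N)/\tau$ and observe that the claim $\varphi_{N-\tau}(n)^k \leq d_n^{-1}$ is equivalent to $u_n(N-\tau)^k \leq u_n(N)^{k-1}$. Applying the upper bound $u_n(N-\tau) \leq q(N-\tau)^{-n}/(1-q(N-\tau)^2)$ and the lower bound $u_n(N) \geq Nq(N)^{-(n-1)}$ from Lemma \ref{lem:encadrement}, it suffices to prove
\begin{equation*}
q(N-\tau)^{-nk} \leq N^{k-1} q(N)^{-(n-1)(k-1)} (1-q(N-\tau)^2)^k.
\end{equation*}
The key leverage is Lemma \ref{lem:hardlowerbound}: since $Nq(N-\tau)(1-q(N-\tau)^2) \geq e^{\tau/N}$, raising to the $k$-th power and using the crucial identity $e^{k\tau/N} = N$ (which is why the threshold $k_1 = N\ln(N)/\tau$ is the right one) yields $(1-q(N-\tau)^2)^k \geq N^{1-k}/q(N-\tau)^k$. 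After substitution and cancellation, the inequality collapses for $n \geq 2$ to the single clean statement $q(N)^{k-1} \leq q(N-\tau)^k$, while the case $n=1$ is trivial. Taking logarithms, this amounts to showing $k \geq \ln(1/q(N))/\ln(q(N-\tau)/q(N))$, which I would verify by combining all three parts of Lemma \ref{lem:variousbounds}: parts (1) and (3) give $\ln(q(N-\tau)/q(N)) \geq \ln(N/(N-\tau)) \geq \tau/N$, while part (2) gives $\ln(1/q(N)) < \ln(N)$, so the ratio is at most $N\ln(N)/\tau = k$. The main obstacle is handling the $(1-q(N-\tau)^2)^k$ factor without losing a power of $N$; this is exactly what Lemma \ref{lem:hardlowerbound} is tailored for, and the hypothesis $N \geq \tau + C(\tau)$ propagates directly from its use.

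For the second inequality, I apply the lower bound $d_n \geq Nq(N)^{-(n-1)}$ from Lemma \ref{lem:encadrement}, which together with the elementary identity $N^{-\lambda/\ln N} = e^{-\lambda}$ gives $d_n^{-\lambda/\ln N} \leq e^{-\lambda} q(N)^{(n-1)\lambda/\ln N}$. The remaining ingredient is the estimate $q(N) \leq 1/\sqrt{N}$ for $N \geq 3$, which I would obtain by evaluating the defining quadratic $X^2 - NX + 1$ at $X = 1/\sqrt{N}$: its value $1/N + 1 - \sqrt{N}$ is nonpositive as soon as $\sqrt{N} \geq 1 + 1/N$, which holds for $N \geq 3$. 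Consequently $q(N)^{\lambda/\ln N} \leq N^{-\lambda/(2\ln N)} = e^{-\lambda/2}$, so the series becomes geometric with common ratio $e^{-\lambda/2}$ and first term $e^{-\lambda}$, hence bounded by $e^{-\lambda}/(1-e^{-\lambda/2}) \leq e^{-\lambda/2}/(1-e^{-\lambda/2})$ as claimed.
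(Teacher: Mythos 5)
Your proof is correct and uses the same ingredients as the paper (Lemmas \ref{lem:encadrement}, \ref{lem:variousbounds}, \ref{lem:hardlowerbound}), though with a slightly different organization. For the first inequality, the paper directly chains
\begin{equation*}
\varphi_{N-\tau}(n)^{k} \leqslant \left(\frac{q(N)}{q(N-\tau)}\right)^{k(n-1)}\left(\frac{1}{Nq(N-\tau)(1-q(N-\tau)^{2})}\right)^{k} \leqslant \frac{1}{N}\left(1-\frac{\tau}{N}\right)^{k(n-1)} \leqslant \frac{1}{N^{n}} \leqslant \frac{1}{d_{n}},
\end{equation*}
whereas you algebraically rearrange the same bounds to isolate the single condition $q(N)^{k-1}\leqslant q(N-\tau)^{k}$, which you then verify; the two routes are essentially equivalent (your condition is precisely the statement that $k$ exceeds the boundedness threshold $k_{0}$ of Theorem \ref{thm:estimategreaterthan2}). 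For the second inequality, your route diverges in one useful place: you use $q(N)\leqslant 1/\sqrt{N}$, valid for all $N\geqslant 3$, whereas the paper invokes $q(N)\leqslant 2/N$ (for $N\geqslant 4$) and then needs $\ln(2)/\ln(N)\leqslant 1/2$, which actually also requires $N\geqslant 4$ even though the lemma is stated for $N\geqslant 3$. Your estimate thus closes a small gap in the stated range and arrives at the same geometric-series bound $e^{-\lambda}/(1-e^{-\lambda/2})\leqslant e^{-\lambda/2}/(1-e^{-\lambda/2})$.
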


\begin{proof}
Setting $k = N\ln(N)/\tau$, we have by Lemma \ref{lem:hardlowerbound} and the bounds of Lemma \ref{lem:encadrement} and Lemma \ref{lem:variousbounds},
\begin{align*}
\varphi_{N-\tau}(n)^{k} & = \left(\frac{u_{n}(N-\tau)}{u_{n}(N)}\right)^{k} \leqslant \left(\frac{q(N)^{n-1}}{q(N-\tau)^{n}}\frac{1}{N(1-q(N-\tau)^{2})}\right)^{k} \\
& \leqslant \left(\frac{q(N)}{q(N-\tau)}\right)^{k(n-1)}\left(\frac{1}{Nq(N-\tau)(1-q(N-\tau)^{2})}\right)^{k} \\
& \leqslant \frac{1}{N}\left(1-\frac{\tau}{N}\right)^{k(n-1)} \leqslant \frac{1}{N^{n}}\leqslant \frac{1}{d_{n}}.
\end{align*}
For the second inequality, we use again Lemma \ref{lem:encadrement} to get
\begin{equation*}
d_{n}^{-\lambda/\ln(N)} \leqslant \left(\frac{q(N)^{n-1}}{N}\right)^{\lambda/\ln(N)} = e^{-\lambda}q(N)^{(n-1)\lambda/\ln(N)}.
\end{equation*}
Using $\sqrt{x}\leqslant 1/2  + x/2$, we see that $q(N) \leqslant 2/N$ for all $N\geqslant 4$, so that the right-hand side of the above inequality is bounded by
\begin{equation*}
e^{-\lambda}\exp\left((n-1)\lambda\left(\frac{\ln(2)}{\ln(N)} - 1\right)\right) \leqslant e^{-\lambda(n+1)/2},
\end{equation*}
from which the result follows.
\end{proof}

We are now ready for the proof of the cut-off phenomenon. For convenience, we will rather consider a measure $\mu$ on the interval $[0, 4]$ and set
\begin{equation*}
\varphi_{\mu} = \int_{0}^{4}\varphi_{N-\tau}\dd\mu(\tau).
\end{equation*}

\begin{thm}\label{thm:mixedrotations}
Let $\mu$ be a measure on $[0, 4]$ such that there exists $\delta > 0$ satisfying $\mu([\delta, 4]) = 1$ and set $\eta = \int\tau\dd\mu$. Then, for any $N\geqslant \max_{\tau\in [\delta, 4]}(\delta + C(\delta))$, the random walk associated to $\varphi_{\mu}$ has a cut-off at $N\ln(N)/\eta$ steps.
\end{thm}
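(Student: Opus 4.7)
The plan is to establish cut-off at $k_\mu := N\ln(N)/\eta$ by the usual two-sided strategy: an upper bound via the Diaconis--Shahshahani lemma and a matching lower bound via Chebyshev's inequality applied to $\chi_1$. The main new ingredient is Lemma \ref{lem:boundsformixedrotations}, which packages the analytic information specific to the mixed state $\varphi_\mu = \int_\delta^4 \varphi_{N-\tau}\,d\mu(\tau)$.

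For the upper bound at $k = k_\mu + cN$, the state $\varphi_\mu$ remains central (as an integral of central states) and Lemma \ref{lem:upperbound} yields
\begin{equation*}
\|\varphi_\mu^{\ast k} - h\|_{TV}^2 \leqslant \frac{1}{4}\sum_{n=1}^{+\infty} d_n^2\, \varphi_\mu(n)^{2k}.
\end{equation*}
Taking the $(\tau/(N\ln N))$-th root of Lemma \ref{lem:boundsformixedrotations}(1) yields the pointwise estimate $\varphi_{N-\tau}(n) \leqslant d_n^{-\tau/(N\ln N)}$ uniformly for $\tau \in [\delta, 4]$, so
\begin{equation*}
\varphi_\mu(n) \leqslant \int_\delta^4 d_n^{-\tau/(N\ln N)}\, d\mu(\tau).
\end{equation*}
The analytic core is then to convert this moment-generating-function-type bound into an estimate with the correct exponent $\eta$. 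I expect to split the sum over $n$ into two regimes. For small $n$, writing $s = \ln(d_n)/(N\ln N)$ and using the elementary Taylor bound $e^{-s\tau} \leqslant 1 - s\tau + (s\tau)^2/2$ together with the second-moment estimate $\int \tau^2\,d\mu \leqslant 4\eta$ (from $\tau \leqslant 4$ on the support) produces an approximate bound of the form $\varphi_\mu(n) \leqslant d_n^{-\eta/(N\ln N)}$ up to a lower-order correction. For large $n$, the pointwise estimate $\varphi_{N-\tau}(n) \leqslant e^{-n\tau/N}$ extracted from the proof of Lemma \ref{lem:hardlowerbound} (valid uniformly in $\tau\in[\delta,4]$ under the stated hypothesis on $N$) integrates to $\varphi_\mu(n) \leqslant e^{-n\delta/N}$, which is geometrically summable by itself. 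Combining both regimes and invoking Lemma \ref{lem:boundsformixedrotations}(2) with $\lambda$ proportional to $c\eta$ should yield an exponential bound of the form $C(c_0)\, e^{-c\eta}$, valid for $c\geqslant c_0 > 0$.

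For the lower bound at $k = k_\mu - cN$, I would copy the proof of Proposition \ref{prop:lowercutoff} essentially verbatim with $\chi_1$ as the test element. Since $\varphi_\mu(1) = \int (N-\tau)/N\,d\mu = 1 - \eta/N$, we have $\varphi_\mu^{\ast k}(\chi_1) = N(1-\eta/N)^k$, and the Rosenthal inequality used in Lemma \ref{lem:lowerboundquantumrotation} (applied with $\eta$ in place of $\tau$, permissible since $0 < \eta \leqslant 4$) gives $\varphi_\mu^{\ast k}(\chi_1) \geqslant e^{c\eta}/5$ for $N\geqslant 5$. Using $\chi_1^2 = \chi_2 + 1$ and computing $\varphi_\mu(2) = (N^2 - 2\eta N + \int \tau^2\,d\mu - 1)/(N^2-1)$, the same asymptotic expansion as in the pure-state case shows that $\var_{\varphi_\mu^{\ast k}}(\chi_1)$ remains bounded by a constant. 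Chebyshev's inequality applied to $\chi_1$ and the projection $\mathbf{1}_{[0, e^{c\eta}/10]}(|\chi_1|)$, exactly as in Proposition \ref{prop:lowercutoff}, then yields $\|\varphi_\mu^{\ast k} - h\|_{TV} \geqslant 1 - 200\,e^{-2c\eta}$.

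The hard part is undoubtedly the upper bound: naive Jensen's inequality applied to the convex map $\tau \mapsto d_n^{-\tau/(N\ln N)}$ produces the \emph{lower} bound $\int d_n^{-\tau/(N\ln N)}\,d\mu \geqslant d_n^{-\eta/(N\ln N)}$, which is the wrong direction. The proof must therefore genuinely exploit both the support condition $\tau \geqslant \delta$ and the finite second moment of $\mu$, and this is also the reason behind the hypothesis that $\mathrm{supp}(\mu)$ be bounded away from $0$ and the condition $N \geqslant \max_{\tau \in [\delta, 4]}(\tau + C(\tau))$, which guarantees that Lemma \ref{lem:hardlowerbound} applies uniformly along the support of $\mu$.
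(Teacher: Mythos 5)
Your lower bound is essentially the paper's argument and works: the paper uses Popoviciu's inequality to bound $\var_{\varphi_\mu^{\ast k}}(\chi_1)\leqslant 4$ (since $-2<\chi_1<2$) rather than computing $\varphi_\mu(2)$ explicitly, obtaining $1-500e^{-2\eta c}$; your route would succeed too, but be aware the variance need not be $\leqslant 1$ as in the pure-state Lemma~\ref{lem:lowerboundquantumrotation}, so the constant $200$ will inflate.

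The upper bound, however, has a genuine gap, and you have in fact diagnosed it yourself. With $s_n=\ln(d_n)/(N\ln N)$, Jensen forces
\begin{equation*}
\varphi_\mu(n)=\int e^{-s_n\tau}\,\dd\mu(\tau)\geqslant e^{-s_n\eta}=d_n^{-\eta/(N\ln N)},
\end{equation*}
so no estimate of the form $\varphi_\mu(n)\leqslant d_n^{-\eta/(N\ln N)}$ can hold, and the fixes you sketch do not rescue this. For the Taylor fix, $\int e^{-s\tau}\dd\mu\leqslant 1-s\eta+\tfrac{s^2}{2}\int\tau^2\dd\mu$, but $\int\tau^2\dd\mu\geqslant\eta^2$, so the quadratic term is at least $\tfrac{s^2\eta^2}{2}$ and you land \emph{above} $e^{-s\eta}$ again, not below. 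For the large-$n$ regime, the uniform estimate $\varphi_{N-\tau}(n)\leqslant e^{-n\tau/N}$ integrates only to $\varphi_\mu(n)\leqslant e^{-n\delta/N}$ (the best uniform bound uses $\tau\geqslant\delta$), and the series $\sum_n d_n^2 e^{-2kn\delta/N}$ converges only for $k\gtrsim N\ln(N)/\delta$. Since $\delta<\eta$ whenever $\mu\neq\delta_\delta$, this gives a cut-off only at the too-late time $N\ln(N)/\delta$. In short, you cannot obtain the threshold $N\ln(N)/\eta$ by bounding $\varphi_\mu(n)$ alone; the averaging must be moved outside the nonlinearity.

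The paper, following Hough--Jiang, does exactly that. The key move is that $\varphi_\mu^{\ast k}$ is itself a mixture over $[\delta,4]^k$, so by convexity of the total variation distance
\begin{equation*}
\|\varphi_\mu^{\ast k}-h\|_{TV}\leqslant\int_{[\delta,4]^k}\|\varphi_{N-\tau_k}\ast\cdots\ast\varphi_{N-\tau_1}-h\|_{TV}\,\dd\mu^{\otimes k}.
\end{equation*}
The Fourier coefficient of the integrand at $n$ is now the product $\prod_i\varphi_{N-\tau_i}(n)\leqslant d_n^{-\sum_i\tau_i/(N\ln N)}$, which replaces the doomed comparison between $\int e^{-s\tau}\dd\mu$ and $e^{-s\eta}$ by the trivial identity $\prod_i e^{-s\tau_i}=e^{-s\sum_i\tau_i}$. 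One then splits the outer integral on the event $E=\{\sum_i\tau_i\leqslant N\ln(N)+c\eta N/2\}$: Hoeffding's inequality (using $\tau_i\in[\delta,4]$) shows $\mu^{\otimes k}(E)$ is exponentially small in $c$, while on $E^c$ the Diaconis--Shahshahani sum, controlled via Lemma~\ref{lem:boundsformixedrotations}, is geometric with the correct exponent. The concentration step is the new idea your proposal is missing.
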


\begin{proof}
The proof closely follows the argument of \cite[Sec 4]{hough2017cut} and we first treat the upper bound. Let us set $k = N\ln(N)/\eta + cN$. We start by the straightforward inequality
\begin{equation*}
\|\varphi_{\mu}^{\ast k} - h\|_{TV}\leqslant \int_{[\delta, 4]^{k}}\|\varphi_{N-\tau_{k}}\ast\cdots\ast\varphi_{N-\tau_{1}} - h\|_{TV}\;\dd\mu(\tau_{1})\cdots\dd\mu(\tau_{n})
\end{equation*}
and set $E = \{(\tau_{1}, \cdots, \tau_{k})\in [\delta, 4]^{k}\mid \sum_{i=1}^{k}\tau_{i}\leqslant N\ln(N) + c\eta N/2\}$. Consider the random variable $X = \sum_{i=1}^{k}\tau_{i}$, which has expectation $k\eta$ under $\mu$. The measurable set $E$ corresponds to the event
\begin{equation*}
X\leqslant \mathbb{E}(X)\left(1-\frac{c\eta}{2(\ln(N) + c\eta)}\right)
\end{equation*}
so that by Hoeffding's inequality (using the fact that $0\leqslant \tau\leqslant 4$),
\begin{equation*}
\mu^{\otimes k}(E)\leqslant \exp\left(-\frac{2k}{16}\left(\frac{c\eta}{2(\ln(N) + c\eta)}\right)^{2}\right) = \exp\left(-\frac{c^{2}\eta N}{32(\ln(N)+c)}\right).
\end{equation*}
The function $x\mapsto x/(\ln(x)+c)$ is increasing as soon as $x\geqslant e\geqslant e^{1-c}$. In particular, for $N\geqslant 3$ it can be bounded below by $3/(\ln(3)+c)$. Moreover, $c^{2}/(\ln(3)+c) > c-\ln(3)$ so that
\begin{equation*}
\mu^{\otimes k}(E)\leqslant 3^{\eta/32}e^{-c\eta/32}\leqslant 3^{1/8}e^{-\eta c/32}.
\end{equation*}
We still have to bound the integral on the complement of $E$. To do this, we apply Lemma \ref{lem:upperbound} and Lemma \ref{lem:boundsformixedrotations} to the integrand (recall that $\tau_{i} \geqslant \delta$ for all $i$), which is therefore less than
\begin{equation*}
\frac{1}{2}\sqrt{\sum_{n=1}^{+\infty}d_{n}^{2}\prod_{i=1}^{k}\varphi_{N-\tau_{i}}(n)^{2}}\leqslant \frac{1}{2}\sqrt{\sum_{n=1}^{+\infty}\exp\left(2\ln(d_{n})\left(1-\frac{1}{N\ln(N)}\sum_{i=1}^{k}\tau_{i}\right)\right)}.
\end{equation*}
By definition of the complement of $E$, each term is bounded by $\exp(-\ln(d_{n})c\eta/\ln(N))$ and by Lemma \ref{lem:boundsformixedrotations} we conclude that
\begin{equation*}
\|\varphi_{\mu}^{\ast k} - h\|_{TV}\leqslant 3^{1/8}e^{-\eta c/32} + \frac{e^{-c\eta/4}}{2\sqrt{1-e^{-c\eta/2}}}.
\end{equation*}

For the lower bound, we proceed as in Proposition \ref{prop:lowercutoff} and all that is needed is estimates of the mean and variance of $\chi_{1}$. Noticing that
\begin{equation*}
\varphi_{\mu}(1) = \frac{1}{N}\int_{0}^{4}\chi_{1}(N-\tau)\dd\mu = 1 - \frac{\eta}{N},
\end{equation*}
we get $\varphi_{\mu}^{\ast k}(\chi_{1}) = d_{1}\varphi_{\mu}(1)^{k} = N(1-\eta/N)^{k}$ and as before we conclude that this is greater than or equal to $e^{\eta c}/5$ for any $N\geqslant 5$. As for the variance, it follows from Popoviciu's inequality (see \cite[Thm 2]{bhatia2000better} for an operator algebraic statement and proof), that since $-2 < \chi_{1} < 2$,
\begin{equation*}
\var_{\psi}(\chi_{1})\leqslant (2 - (-2))^{2}/4 = 4
\end{equation*}
for any state $\psi$. Applying this to $\varphi_{\mu}^{\ast k}$ and using the same argument as in Proposition \ref{prop:lowercutoff} then yields
\begin{equation*}
\|\varphi_{\mu}^{\ast (N\ln(N)/4 - cN)} - h\|_{TV} \geqslant 1 - 500e^{-2\eta c}.
\end{equation*}
\end{proof}

Extending the previous result seems impossible with the techniques of the present work since it is clear that our estimates for fixed $\tau$ can only be valid for $N$ larger than a function of $\tau$ going to infinity as $\tau$ goes to $0$.

\section{Further examples}\label{sec:further}

In this section we will consider random walks on other compact quantum groups which were also introduced by S. Wang in \cite{wang1998quantum} and called \emph{free symmetric quantum groups}. As before, we define them through a universal algebra :

\begin{de}
Let $\O(S_{N}^{+})$ be the universal $*$-algebra generated by $N^{2}$ \emph{self-adjoint} elements $u_{ij}$ such that for all $1\leqslant i, j \leqslant N$,
\begin{equation*}
u_{ij}^{2} = u_{ij} \text{ and } \displaystyle\sum_{k=1}^{N}u_{ik} = 1 = \displaystyle\sum_{k=1}^{N}u_{kj}.
\end{equation*}
The formula
\begin{equation*}
\D(u_{ij}) = \sum_{k=1}^{N}u_{ik}\otimes u_{kj}
\end{equation*}
extends to a $*$-algebra homomorphism $\D : \O(S_{N}^{+})\to \O(S_{N}^{+})\otimes \O(S_{N}^{+})$ and this can be completed into a compact quantum group structure.
\end{de}

As the name and notation suggest, the abelianization of $\O(S_{N}^{+})$ is exactly $\O(S_{N})$ and the two even coincide for $N\leqslant 3$. However, as soon as $N\geqslant 4$ the compact quantum group $S_{N}^{+}$ is infinite (in the sense that the algebra $\O(S_{N}^{+})$ is infinite-dimensional) and therefore behaves very differently from the classical symmetric group. This will raise an analytic issue in the sequel. Let us now describe the representation theory of $S_{N}^{+}$, which is quite close to that of $O_{N}^{+}$. The irreducible representations are still labelled by nonnegative integers but this time the recursion relation for characters is
\begin{equation}\label{eq:recursionsymetric}
\chi_{1}\chi_{n} = \chi_{n+1} + \chi_{n} + \chi_{n-1}.
\end{equation}
To translate this into an explicit isomorphism with $\C[X]$, first note that keeping the notations of Section \ref{sec:orthogonal}, $u_{2n}(X)$ has only even powers of $X$ for any $n$. Thus, $v_{n}(X) = u_{2n}(\sqrt{X})$ is a polynomial in $X$ and it is easily checked that this new sequence satisfies the above recursion relation. Once again, one has $d_{n} = v_{n}(N) = u_{2n}(\sqrt{N})$.

\subsection{Pure state random walks on free symmetric quantum groups}

As for free orthogonal quantum groups, we can study pure state random walks. In view of the link between the polynomials $u_{n}$ and $v_{n}$, estimates of the total variation distance for the random walk associated to a pure state on $S_{N}^{+}$ can be easily deduced from Proposition \ref{prop:upperboundlessthantwo} and Theorem \ref{thm:estimategreaterthan2}. We will therefore simply give the statements, starting with the case of small $t$.

\begin{prop}
Let $\vert t\vert < 4$ be fixed. Then, for any $k\geqslant 2$,
\begin{equation*}
\|\varphi_{t}^{\ast k} - h\|_{TV}\leqslant \frac{1}{2}\sqrt{\frac{N}{q(\sqrt{N})^{2}(1-q(\sqrt{N})^{4})}}\left(\frac{q(\sqrt{N})}{N\sqrt{1 - t^{2}/4}}\right)^{k}
\end{equation*}
In particular, if $t<2\sqrt{1-\left(\frac{q(\sqrt{N})}{N}\right)^{2}}$ then the random walks converges exponentially.
\end{prop}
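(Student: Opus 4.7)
My plan is to mirror the proof of Proposition \ref{prop:upperboundlessthantwo} in the $O_N^+$ setting, exploiting the identification given just above the statement: $v_n(X) = u_{2n}(\sqrt{X})$ and $d_n = v_n(N) = u_{2n}(\sqrt{N})$. Under this correspondence the $S_N^+$ tail estimate becomes a variant of the $O_N^+$ one with the substitutions $N \mapsto \sqrt{N}$ and $u_n \mapsto u_{2n}$, so I expect the argument to adapt essentially verbatim.

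Concretely, I would begin by applying the upper bound lemma (Lemma \ref{lem:upperbound}) to the central state $\varphi_{t}$. Since $\widehat{\varphi_{t}^{\ast k}}(n) = \varphi_{t}(n)^{k}\Id$ with $\varphi_{t}(n) = v_{n}(t)/d_{n}$, this produces
\begin{equation*}
\|\varphi_{t}^{\ast k} - h\|_{TV}^{2} \leq \frac{1}{4}\sum_{n\geq 1}\frac{u_{2n}(\sqrt{t})^{2k}}{u_{2n}(\sqrt{N})^{2k-2}}.
\end{equation*}
For the numerator, $\vert t\vert < 4$ allows me to write $\sqrt{t} = 2\cos\theta$ and apply the Chebyshev sine identity $u_{2n}(2\cos\theta) = \sin((2n+1)\theta)/\sin\theta$ to obtain a uniform bound of the form $1/\vert\sin\theta\vert$, exactly as in Proposition \ref{prop:upperboundlessthantwo}; this is where the factor $(1 - t^{2}/4)^{-k}$ should appear after bookkeeping the parametrization of pure states on $S_{N}^{+}$. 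For the denominator, I would invoke Lemma \ref{lem:encadrement} at the point $\sqrt{N}\geq 2$ (i.e., $N\geq 4$), giving $u_{2n}(\sqrt{N}) \geq \sqrt{N}\,q(\sqrt{N})^{-(2n-1)}$ and hence $u_{2n}(\sqrt{N})^{2k-2} \geq N^{k-1}q(\sqrt{N})^{-(2n-1)(2k-2)}$.

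Combining these estimates turns the tail sum into a geometric series in $q(\sqrt{N})^{4k-4}$. I would sum it and then use the crude inequality $1 - q(\sqrt{N})^{4k-4} \geq 1 - q(\sqrt{N})^{4}$, valid for $k\geq 2$, to obtain a single clean bound independent of $k$ in that factor. Taking square roots then yields the announced inequality, and the convergence criterion follows from requiring the base of the exponential, $q(\sqrt{N})/(N\sqrt{1-t^{2}/4})$, to be strictly less than $1$. I do not foresee any substantive obstacle: the only points needing care are (i) verifying that the sine bound produces $\sqrt{1-t^{2}/4}$ rather than $\sqrt{1-t/4}$ (which amounts to checking which argument of the Chebyshev polynomial is identified with the pure-state parameter $t$), and (ii) noting that the hypothesis $\sqrt{N}\geq 2$ needed by Lemma \ref{lem:encadrement} is harmless since for $N\leq 3$ one has $S_{N}^{+} = S_{N}$ and the estimate is not of interest.
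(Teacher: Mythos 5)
Your approach is sound and is exactly what the paper intends: the text preceding this proposition explicitly says that the estimate is obtained from Proposition \ref{prop:upperboundlessthantwo} via the substitution $v_{n}(X) = u_{2n}(\sqrt{X})$, $d_{n} = u_{2n}(\sqrt{N})$, and the paper gives no further proof. Your accounting of the geometric series (ratio $q(\sqrt{N})^{4k-4}$, replaced by $q(\sqrt{N})^{4}$ using $k\geqslant 2$), the prefactor, and the applicability of Lemma \ref{lem:encadrement} at $\sqrt{N}\geqslant 2$ all check out.

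One point you should not leave as a loose end, however, is item (i) in your list of caveats. You can resolve it: the parametrization used throughout this section is $\varphi_{t}(\chi_{n}) = v_{n}(t) = u_{2n}(\sqrt{t})$, with $v_{1}(t) = t-1$. This is confirmed by the lower bound stated just after the proposition, which reads $\frac{N-1}{6}\bigl(\frac{t-1}{N-1}\bigr)^{k}$ and hence uses $\varphi_{t}(1) = (t-1)/(N-1)$. With this parametrization, writing $\sqrt{t} = 2\cos\theta$ forces $t = 4\cos^{2}\theta$, so $\vert\sin\theta\vert = \sqrt{1-t/4}$, not $\sqrt{1-t^{2}/4}$. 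Your computation therefore yields
\begin{equation*}
\|\varphi_{t}^{\ast k} - h\|_{TV}\leqslant \frac{1}{2}\sqrt{\frac{N}{q(\sqrt{N})^{2}(1-q(\sqrt{N})^{4})}}\left(\frac{q(\sqrt{N})}{N\sqrt{1 - t/4}}\right)^{k},
\end{equation*}
with exponential convergence when $t < 4\bigl(1-(q(\sqrt{N})/N)^{2}\bigr)$. The $t^{2}/4$ in the stated proposition appears to be carried over verbatim from the $O_{N}^{+}$ case without adjusting for the change of variable $t\mapsto\sqrt{t}$; you should trust your derivation here rather than deferring to the printed formula. (Note also that, since $\sqrt{t}$ must be real for the sine identity to apply and the spectrum of $\chi_{1}+1$ lies in $[0,N]$, the hypothesis is really $0\leqslant t < 4$, not $\vert t\vert < 4$.)
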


One can also get a lower bound like in Proposition \ref{prop:lowerbound} : noticing that $u_{2}(X) = X^{2} - 1$ yields
\begin{equation*}
\|\varphi^{\ast k} - h\|_{TV} \geqslant \frac{N-1}{6}\left(\frac{t-1}{N-1}\right)^{k}.
\end{equation*}

For larger $t$, the proof is also the same as in Theorem \ref{thm:estimategreaterthan2}.

\begin{prop}
Let $\vert t\vert > 4$ and let $k_{0}$ be the smallest integer such that $q(t) > q(N)^{1-1/k_{0}}$. If $k\leqslant k_{0}$ then the state $\varphi_{t}^{\ast k}$ is not bounded on $L^{\infty}(S_{N}^{+})$ and otherwise
\begin{equation*}
\|\varphi_{t}^{\ast k} - h\|_{TV}\leqslant \frac{1}{2}\sqrt{\frac{N^{2}q(\sqrt{t})^{4k_{0}}}{q(\sqrt{t})^{4k_{0}} - q(\sqrt{N})^{4k_{0}-4}}}\left(\frac{q(\sqrt{N})}{\sqrt{N}q(\sqrt{t})^{2}(1-q(\sqrt{t})^{2})}\right)^{k}
\end{equation*}
\end{prop}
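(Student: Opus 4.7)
The plan is to mimic the proof of Theorem \ref{thm:estimategreaterthan2} by exploiting the identification $v_{n}(X) = u_{2n}(\sqrt{X})$ recorded just before the statement, which immediately gives $d_{n} = u_{2n}(\sqrt{N})$ and $\varphi_{t}(n) = u_{2n}(\sqrt{t})/u_{2n}(\sqrt{N})$. This reduces every quantity to be estimated to a value of an $O_{N}^{+}$-type Chebyshev polynomial, so that Lemma \ref{lem:encadrement} can be applied directly with arguments $\sqrt{t}$ and $\sqrt{N}$ in place of $t$ and $N$. The hypothesis $\vert t\vert > 4$ is what guarantees $\sqrt{\vert t\vert} > 2$, which is the range of validity of that lemma.

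For the unboundedness half, I would first record the value $\|\chi_{n}\|_{\infty} = v_{n}(4) = u_{2n}(2) = 2n+1$ for the free symmetric quantum group, so that
\begin{equation*}
\frac{\varphi_{t}^{\ast k}(\chi_{n})}{\|\chi_{n}\|_{\infty}} = \frac{1}{2n+1}\frac{u_{2n}(\sqrt{t})^{k}}{u_{2n}(\sqrt{N})^{k-1}}.
\end{equation*}
Applying the lower bound on $u_{2n}(\sqrt{t})$ and the upper bound on $u_{2n}(\sqrt{N})$ from Lemma \ref{lem:encadrement} isolates a dominant factor of the form $\bigl(q(\sqrt{N})^{k-1}/q(\sqrt{t})^{k}\bigr)^{2n}$ times a constant. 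Exactly as in the proof of Lemma \ref{lem:boundednesscriterion}, this grows without bound in $n$ whenever the threshold condition fails, so $\varphi_{t}^{\ast k}$ cannot be the evaluation of a bounded functional on $L^{\infty}(S_{N}^{+})$. Conversely, once the threshold is exceeded, the opposite pair of inequalities from Lemma \ref{lem:encadrement} shows that the partial sums $\sum_{n=0}^{p} v_{n}(t)^{k}v_{n}(N)^{1-k}\chi_{n}$ are Cauchy in norm, yielding the $L^{1}$-density of $\varphi_{t}^{\ast k}$.

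For the explicit total variation bound, the Upper Bound Lemma \ref{lem:upperbound} combined with the centrality of $\varphi_{t}$ reduces the problem to estimating
\begin{equation*}
A_{k}(t) = \sum_{n=1}^{+\infty}\frac{v_{n}(t)^{2k}}{v_{n}(N)^{2k-2}} = \sum_{n=1}^{+\infty}\frac{u_{2n}(\sqrt{t})^{2k}}{u_{2n}(\sqrt{N})^{2k-2}}.
\end{equation*}
Using Lemma \ref{lem:encadrement} with parameter $\sqrt{t}$ on the numerator and parameter $\sqrt{N}$ on the denominator produces a bound of the form
\begin{equation*}
\frac{u_{2n}(\sqrt{t})^{2k}}{u_{2n}(\sqrt{N})^{2k-2}} \leqslant \left(\frac{q(\sqrt{N})^{2k-2}}{q(\sqrt{t})^{2k}}\right)^{2n-1}\cdot \frac{C(t,N,k)}{N^{k-1}\bigl(q(\sqrt{t})(1-q(\sqrt{t})^{2})\bigr)^{2k}\cdot q(\sqrt{t})^{?}},
\end{equation*}
where the leading geometric ratio is strictly less than one precisely under the hypothesis on $k_{0}$. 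Summing the resulting geometric series and bounding the truncation factor by its value at $k = k_{0}$, exactly as in the final display of the proof of Theorem \ref{thm:estimategreaterthan2}, gives the announced formula after taking the square root.

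The main obstacle is not conceptual but clerical: the doubling of the index ($2n$ instead of $n$) means that every exponent in the orthogonal computation gets multiplied by $2$, and one must keep careful track of the bookkeeping constants coming from $1-q(\sqrt{N})^{2}$ versus $1-q(\sqrt{t})^{2}$ so that the right-hand side matches the stated expression. Once this is done there is no genuinely new estimate to prove; the proof is a line-by-line transcription of Theorem \ref{thm:estimategreaterthan2} through the change of variable $t\mapsto \sqrt{t}$, $N\mapsto \sqrt{N}$.
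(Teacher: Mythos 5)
Your proposal is correct and follows exactly the route the paper intends; the paper gives no explicit argument for this proposition, saying only that ``the proof is also the same as in Theorem \ref{thm:estimategreaterthan2}'', and the substitution you identify --- $n\mapsto 2n$, $t\mapsto\sqrt{t}$, $N\mapsto\sqrt{N}$, together with $\|\chi_n\|_\infty = u_{2n}(2) = 2n+1$ --- is precisely the right way to port every estimate from the orthogonal case. Your observation that $|t|>4$ is what keeps $\sqrt{|t|}>2$ in the range of Lemma \ref{lem:encadrement} is also the correct reason for the shifted hypothesis. One small remark on the bookkeeping you flag: if you carry out the geometric summation carefully, the common ratio is $s = q(\sqrt{N})^{4k-4}/q(\sqrt{t})^{4k}$ (the square of the orthogonal ratio, because the exponent is $2n-1$ rather than $n-1$), and the resulting constant in front is actually smaller than the one displayed in the proposition by a factor of $Nq(\sqrt{N})^{2}>1$ (by Lemma \ref{lem:variousbounds}(2)); the stated formula appears to absorb a harmless overestimate at that step. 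Since a tighter bound trivially implies the stated one, this does not affect correctness, and no genuinely new estimate is needed beyond those you cite.
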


The main point in the above statement is that the threshold $k_{0}$ is the same as for $O_{N}^{+}$, so that the cut-off parameter of a uniform random walk on a conjugacy class should be given by the same formula as before. One of the simplest examples of such a random walk is the one associated to the uniform measure on the set of transpositions, or equivalently on the conjugacy class of a transposition. Since the trace of a transposition matrix is $N-2$, this is given by the state $\varphi_{N-2}$ and the expected cut-off parameter is $N\ln(N)/2$. This can be proven by the same strategy as for Theorem \ref{thm:randomrotation} but the computations are more involved.

\begin{thm}\label{thm:randomtranspositions}
For any $N\geqslant 16$, the random walk associated to $\varphi_{N-2}$ on $S_{N}^{+}$ has a cut-off at $N\ln(N)/2$ steps.
\end{thm}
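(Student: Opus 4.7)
The plan is to mimic the two-sided argument of Theorem \ref{thm:randomrotation} and Proposition \ref{prop:lowercutoff}, using the identification $v_{n}(X) = u_{2n}(\sqrt{X})$ as the bridge that transports the $O_{N}^{+}$-estimates to $S_{N}^{+}$. Concretely, the upper bound lemma produces
\begin{equation*}
\|\varphi_{N-2}^{\ast k} - h\|_{TV}^{2} \leqslant \frac{1}{4}\sum_{n=1}^{+\infty}\frac{u_{2n}(\sqrt{N-2})^{2k}}{u_{2n}(\sqrt{N})^{2k-2}},
\end{equation*}
which is structurally identical to the sum $A_{k}$ of Subsection \ref{subsec:purestates} except that it runs over even indices and the arguments are $\sqrt{N-2}$ and $\sqrt{N}$. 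Bounding each term by Lemma \ref{lem:encadrement} gives a geometric sum that converges precisely when $q(\sqrt{N-2})^{k} > q(\sqrt{N})^{k-1}$; since $q(\sqrt{N}) \sim 1/\sqrt{N}$ and $q(\sqrt{N-2})/q(\sqrt{N}) \sim 1 + 1/N$, the threshold asymptotically equals $N\ln(N)/2$, which confirms that the right cut-off parameter is being targeted.

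First I would split the resulting bound, at $k = N\ln(N)/2 + cN$, into the analogues of the two factors $B_{k}(N)$ and $B_{k}'(N)$ used in the proof of Theorem \ref{thm:randomrotation}. The analysis of $B_{k}(N)$ is essentially the same as before after substituting $N \rightsquigarrow \sqrt{N}$ and $\tau \rightsquigarrow \sqrt{N}-\sqrt{N-2}$, and amounts to showing that the rescaled ratio $(q(\sqrt{N})/q(\sqrt{N-2}))^{2k}/q(\sqrt{N})^{2}$ stays controlled. The factor $B_{k}'(N)$, on the other hand, requires showing that $\sqrt{N}\,q(\sqrt{N-2})(1 - q(\sqrt{N-2})^{2}) \geqslant e^{(\sqrt{N}-\sqrt{N-2})/\sqrt{N}}$, which is the genuine obstacle of the proof. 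Lemma \ref{lem:hardlowerbound} cannot be invoked directly, because the domain condition $\sqrt{N}\geqslant(\sqrt{N}-\sqrt{N-2})+C(\sqrt{N}-\sqrt{N-2})$ blows up as $\sqrt{N}-\sqrt{N-2}\to 0$: the constant in $C(\tau)$ would need to be strictly less than $1$, which fails. I would therefore redo the Taylor expansion of $q(\sqrt{N-2})(1-q(\sqrt{N-2})^{2})$ directly at the scale $\tau\sim 1/\sqrt{N}$, using that the expansion coefficients $b_{n}$ in Lemma \ref{lem:hardlowerbound} are explicit, and extract a sharper remainder estimate valid for $N\geqslant 16$. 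This is the most delicate computation and, presumably, the source of the precise numerical threshold in the statement.

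For the lower bound I would follow Proposition \ref{prop:lowercutoff} by applying Chebyshev's inequality to $\chi_{1}$ under $\varphi_{N-2}^{\ast k}$ with $k = N\ln(N)/2 - cN$. Since $v_{1}(X) = u_{2}(\sqrt{X}) = X-1$ and $d_{1}=N-1$, one has
\begin{equation*}
\varphi_{N-2}^{\ast k}(\chi_{1}) = (N-1)\left(1-\frac{2}{N-1}\right)^{k} \sim e^{2c},
\end{equation*}
so that Chebyshev on the window $[0, e^{2c}/\text{const}]$ will produce a lower bound of the shape $1 - \text{const}\cdot e^{-4c}\operatorname{var}_{\varphi_{N-2}^{\ast k}}(\chi_{1})$. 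The second moment is handled by the recursion $\chi_{1}^{2} = \chi_{2} + \chi_{1} + 1$, but unlike the $O_{N}^{+}$ situation where $\chi_{1}^{2} = \chi_{2}+1$ let us bound the variance almost for free, here the $\chi_{2}$ and $\chi_{1}^{2}$ contributions have coinciding leading order and one must track $1/N$-corrections to see their cancellation. A convenient substitute is Popoviciu's inequality applied to $\chi_{1}$, which satisfies $-2\leqslant\chi_{1}\leqslant N-1$ on $L^{\infty}(S_{N}^{+})$, giving an absolute (if slightly loose) variance bound sufficient for the cut-off statement.

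The only conceptually hard step is therefore the refined version of Lemma \ref{lem:hardlowerbound}; once that inequality is established in the form needed for $t = N-2$ and $N\geqslant 16$, combining it with the transported version of Theorem \ref{thm:estimategreaterthan2} yields the upper cut-off, and the Chebyshev argument above yields the matching lower cut-off, completing the proof.
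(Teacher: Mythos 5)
Your high-level strategy is the same as the paper's (two-sided argument: the $B_k$/$B_k'$ split for the upper bound, Chebyshev plus Popoviciu for the lower bound), and you correctly identify that Lemma \ref{lem:hardlowerbound} cannot be invoked verbatim. However there are two concrete errors that would derail the proof.

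First, the key inequality you propose to prove for the $B_k'$ factor is the wrong one. Your formula $\sqrt{N}\,q(\sqrt{N-2})(1-q(\sqrt{N-2})^{2}) \geqslant e^{(\sqrt{N}-\sqrt{N-2})/\sqrt{N}}$ comes from substituting $N \rightsquigarrow \sqrt{N}$, $\tau \rightsquigarrow \sqrt{N}-\sqrt{N-2}$ into the $O_N^+$ expression, but this substitution does not correctly transport the estimate. For $S_N^+$ the dimensions are $d_n = v_n(N) = u_{2n}(\sqrt{N})$: the \emph{doubled} index means that Lemma \ref{lem:encadrement} gives $u_{2n}(\sqrt{t}) \leqslant q(\sqrt{t})^{-2n}/(1-q(\sqrt{t})^{2})$ and $u_{2n}(\sqrt{N}) \geqslant \sqrt{N}\,q(\sqrt{N})^{-(2n-1)}$, so after collecting the $n$-dependent geometric factor one is left with
\begin{equation*}
B_k'(N) = \left(\frac{q(\sqrt{N})}{\sqrt{N}\,q(\sqrt{N-2})^{2}\bigl(1-q(\sqrt{N-2})^{2}\bigr)}\right)^{k},
\end{equation*}
with a \emph{squared} $q(\sqrt{N-2})^{2}$ and an extra $q(\sqrt{N})$ in the numerator. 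The inequality you actually need (and the one the paper proves, via a new series expansion of $q(t)^{2}(1-q(t)^{2}) = (3t-t^{3})q(t) + t^{2} - 2$ with coefficients $c_n$ rather than recycling the $b_n$ of Lemma \ref{lem:hardlowerbound}) is
\begin{equation*}
\frac{\sqrt{N}}{q(\sqrt{N})}\,q(\sqrt{N-2})^{2}\bigl(1-q(\sqrt{N-2})^{2}\bigr) \geqslant e^{2/N}.
\end{equation*}
Your version and this one differ by a factor $q(\sqrt{N-2})/q(\sqrt{N})$ and have different right-hand sides; even asymptotically they are not interchangeable, and a Taylor expansion of your inequality at scale $1/N$ gives $1+1/N+O(1/N^2)$ on both sides, which is a genuinely different (and in fact false-or-borderline) statement from the one that must hold.

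Second, the Popoviciu step is broken as written. You claim $-2 \leqslant \chi_1 \leqslant N-1$ in $L^\infty(S_N^+)$ and assert this yields an $N$-independent variance bound, but Popoviciu with those endpoints gives $\var \leqslant (N+1)^2/4$, which grows with $N$ and would destroy the lower cut-off estimate $1 - \mathrm{const}\cdot e^{-4c}\var(\chi_1)$. The point that makes Popoviciu usable here is that the spectrum of $\chi_1$ in $L^\infty(S_N^+)$ is $N$-\emph{independent}: $1+\chi_1$ is free-Poisson distributed under $h$ with support $[0,4]$, so $\chi_1 \in [-1,3]$ and Popoviciu gives $\var \leqslant 4$ uniformly in $N$. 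Without this, your lower bound does not close.
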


\begin{proof}
For the upper bound, the part concerning $B_{k}$ is the same as in the proof of Theorem \ref{thm:randomrotation}, so let us focus on $B_{k}'$. It is enough to prove that
\begin{equation*}
\frac{\sqrt{N}}{q(\sqrt{N})}q(\sqrt{N-2})^{2}\left(1-q(\sqrt{N-2})^{2}\right) \geqslant e^{2/N}.
\end{equation*}
Writing $q(t)^{2}(1-q(t)^{2}) = (3t-t^{3})q(t) + t^{2} - 2$ and expanding we get
\begin{align*}
q(t)^{2}(1-q(t)^{2}) = t\sum_{n=2}^{+\infty}(3a_{n} - 4a_{n+1})\left(\frac{2}{t}\right)^{2n-1} \\
\end{align*}
since $c_{n} = 4a_{n+1} - 3a_{n} = (n-5)a_{n}/(n+1)$, the sum splits as
\begin{equation*}
\frac{1}{t^{2}} + \frac{1}{t^{4}} + \frac{1}{t^{6}} - t\sum_{n=6}^{+\infty}c_{n}\left(\frac{2}{t}\right)^{2n-1}.
\end{equation*}
Moreover, the same estimate as for $b_{n}$ yields $c_{n}\leqslant (n-5)/2n(n+1)$ and the sequence on the right-hand side is increasing up to $n=10$ and then decreasing. Its maximum is therefore $1/44$. Using $\sqrt{t}$ instead of $t$ and the fact that $c_{n} \leqslant 1/44$, we eventually get
\begin{equation*}
q(\sqrt{t})^{2}(1-q(\sqrt{t})^{2}) = \frac{1}{t} + \frac{1}{t^{2}} + \frac{1}{t^{3}} - \sum_{n=6}^{+\infty}2c_{n}\left(\frac{4}{t}\right)^{n-1}\geqslant \frac{t^{2} + t + 1}{t^{3}} - \frac{4^{5}}{22\times t^{4}(t-4)} .
\end{equation*}
On the other hand,
\begin{equation*}
e^{2/(t+2)} \leqslant \sum_{k=0}^{+\infty}\left(\frac{2}{t+2}\right)^{k} = \frac{t+2}{t} = 1+\frac{2}{t}
\end{equation*}
so that it is enough to have (noticing that $q(x)^{-1} = (x+\sqrt{x^{2}-4})/2$)
\begin{align*}
& \sqrt{t+2}\frac{\sqrt{t+2} + \sqrt{t-2}}{2}\left(\frac{t^{2} + t + 1}{t^{3}} - \frac{4^{5}}{22\times t^{4}(t-4)}\right) - 1 - \frac{2}{t} \geqslant 0.
\end{align*}
To see when this inequality holds, let us first prove that for $t\geqslant 12$, $1/2t^{3} \geqslant 4^{5}/22(t^{4}(t-4))$. Proceeding as in the proof of Lemma \ref{lem:threefunctions}, we reduce the problem to $11t(t-4)\geqslant 4^{5}$, i.e.
\begin{equation*}
t^{2} - 4t - \frac{4^{5}}{11} \geqslant 0.
\end{equation*}
which is satisfied as soon as $t$ is greater than $2 + 2\sqrt{1+16^{2}/11}\leqslant 12$. Using this, it is now enough to check that
\begin{align*}
1 + \frac{2}{t} & \leqslant \left(1 + \frac{t}{2} + \frac{\sqrt{t^{2}-4}}{2}\right)\left(\frac{1}{t} + \frac{1}{t^{2}} + \frac{1}{2t^{3}}\right) \\
& = \frac{1}{t} + \frac{1}{t^{2}} + \frac{1}{2t^{3}} + \frac{1}{2} + \frac{1}{2t} + \frac{1}{4t^{2}} + \frac{\sqrt{t^{2}-4}}{2t} + \frac{\sqrt{t^{2}-4}}{2t^{2}} + \frac{\sqrt{t^{2}-4}}{4t^{3}}.
\end{align*}
We will prove the stronger inequality obtained by removing the terms with $t^{3}$ at the denominator in the right-hand side. After simplifying and multiplying by $2t^{2}$ we get the inequality
\begin{equation*}
t^{2} + t \leqslant \frac{5}{2} + t\sqrt{t^{2}-4} + \sqrt{t^{2}-4}.
\end{equation*}
Now, the function $f: t\mapsto (t+1)(t-\sqrt{t^{2}-4})$ satisfies
\begin{equation*}
f'(t) = t-\sqrt{t^{2}-4} + (t+1)\left(1-\frac{t}{\sqrt{t^{2}-4}}\right) = \frac{t-\sqrt{t^{2}-4}}{\sqrt{t^{2} - 4}}\left(\sqrt{t^{2}-4} - (t+1)\right).
\end{equation*}
This is negative, thus $f$ is decreasing and for $t\geqslant 14$ it is smaller than $f(14) \approx 2.15 < 5/2$.

Concerning the lower bound, first note that the expectation and variance of $\chi_{1}$ with respect to $h$ are respectively equal to $0$ and $1$. Moreover, by the same argument as for $O_{N}^{+}$,
\begin{equation*}
\varphi_{N-2}(\chi_{1}) = \frac{(N-3)^{k}}{(N-1)^{k}} \geqslant \frac{e^{2c}}{5}
\end{equation*}
for $k = N\ln(N)/2 - c$ and the variance can be bounded independently from $N$ by Popoviciu's inequality.
\end{proof}

\begin{rem}
Plotting the function appearing in the study of the upper bound suggests that it is positive as soon as $t\geqslant 12$, which would give a cut-off for all $N\geqslant 14$. This indicates that even though they look loose, our estimates are close to optimal.
\end{rem}

The cut-off parameter is the same as in the classical case (see \cite{diaconis1981generating}). We can even consider the conjugacy class of $m$-cycles for any integer $m$ and, for $N$ large enough, the cut-off will appear at $N\ln(N)/m$ steps, again as in the classical case \cite{hough2016random}.

\subsection{Mixed states and transition operators}

There are many examples of mixed states on $S_{N}^{+}$ coming from classical random walks. However, their study in the quantum case is prevented by the fact that $S_{N}^{+}$ is not amenable for $N\geqslant 5$, a phenomenon which was alluded to for $O_{N}^{+}$ in Subsection \ref{subsec:mixedrotations}. We will now illustrate this in more details on a simple example with random transpositions as follows : assume you have a deck of $N$ cards and spread them on a table. Randomly select one card uniformly (i.e. with probability $1/N$ for each card) and then select another one in the same way. If the same card has been selected twice, nothing is done. Otherwise, the two cards are swapped. This corresponds to the measure on $S_{N}$ giving probability $1/N^{2}$ to all transpositions and $1/N$ to the identity. Since transpositions form a conjugacy class, the measure can be restated as being
\begin{equation*}
\mu_{\text{rt}} = \frac{N-1}{N}\mu_{\text{tran}} + \frac{1}{N}\delta_{\id}.
\end{equation*}
where $\mu_{\text{tran}}$ is the uniform measure on the set of transpositions. The equation above directly gives the state on $\O(S_{N}^{+})$ corresponding to "random quantum transposition" :
\begin{equation*}
\varphi_{\text{rt}} = \frac{N-1}{N}\varphi_{N-2} + \frac{1}{N}\varphi_{N}.
\end{equation*}
The state $\varphi_{N-2}^{\ast k}$ is bounded on $L^{\infty}(S_{N}^{+})$ for $k$ large enough but not $\varphi_{N}^{\ast k}$ since it is the co-unit and $S_{N}^{+}$ is not co-amenable for $N\geqslant 5$. This implies that no convolution power of $\varphi_{\text{rt}}$ is bounded on $L^{\infty}(S_{N}^{+})$ so that the total variation distance is never defined (it is clear that the sum in the upper bound lemma diverges since each term is greater than $N^{-k}$). This is in sharp contrast with the classical case (a finite quantum group is always amenable).

However, it is known (see for instance \cite[Lem 3.4]{brannan2011approximation}) that the associated transition operator $P_{\varphi_{\text{rt}}} = (\id\otimes \varphi_{\text{rt}})\circ\D$ always extends to a bounded linear map on $L^{\infty}(\G)$. We can therefore compare it with $P_{h}$ using operator norms. In particular, we can see them as operators on $L^{2}(S_{N}^{+})$ and the corresponding norm is then easy to compute :

\begin{lem}
Let $\psi$ be any central linear form on a compact quantum group $\G$. Then,
\begin{equation*}
\|P_{\psi}\|_{B(L^{2}(\G))} = \sup_{\alpha\in \Irr(\G)}\vert \psi(\alpha)\vert.
\end{equation*}
\end{lem}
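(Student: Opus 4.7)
The plan is to diagonalize $P_{\psi}$ with respect to the Peter--Weyl orthogonal basis of $L^{2}(\G)$. The key observation is that for a central linear form, $P_{\psi}$ acts as a scalar on each isotypic block of matrix coefficients, so its $L^{2}$-operator norm is simply the supremum of those scalars in absolute value.

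First, I would compute the action of $P_{\psi}$ on a matrix coefficient $u^{\alpha}_{ij}$. Using the defining formula for the coproduct on matrix coefficients and centrality of $\psi$,
\begin{equation*}
P_{\psi}(u^{\alpha}_{ij}) = \sum_{k=1}^{d_{\alpha}} u^{\alpha}_{ik}\,\psi(u^{\alpha}_{kj}) = \sum_{k=1}^{d_{\alpha}} u^{\alpha}_{ik}\,\psi(\alpha)\delta_{kj} = \psi(\alpha)\, u^{\alpha}_{ij}.
\end{equation*}
Thus each finite-dimensional subspace $V_{\alpha} = \mathrm{span}\{u^{\alpha}_{ij} : 1\leqslant i,j\leqslant d_{\alpha}\}$ is $P_{\psi}$-invariant, and the restriction of $P_{\psi}$ to $V_{\alpha}$ is the scalar operator $\psi(\alpha)\cdot\mathrm{id}_{V_{\alpha}}$.

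Next, I would invoke the Peter--Weyl orthogonality relations recalled in Subsection 2.1, which give that the family $\{u^{\alpha}_{ij}\}$ is orthogonal in $L^{2}(\G)$ with $\|u^{\alpha}_{ij}\|_{2}^{2} = 1/d_{\alpha}$, and that $L^{2}(\G) = \bigoplus_{\alpha\in\Irr(\G)} V_{\alpha}$ as an orthogonal Hilbert space direct sum. Because this decomposition is $P_{\psi}$-invariant and $P_{\psi}|_{V_{\alpha}} = \psi(\alpha)\,\mathrm{id}$, the standard fact about orthogonal direct sums of bounded operators yields
\begin{equation*}
\|P_{\psi}\|_{B(L^{2}(\G))} = \sup_{\alpha\in\Irr(\G)} \|P_{\psi}|_{V_{\alpha}}\|_{B(V_{\alpha})} = \sup_{\alpha\in\Irr(\G)} |\psi(\alpha)|,
\end{equation*}
where the identity is interpreted in $[0,+\infty]$, so that $P_{\psi}$ extends boundedly to $L^{2}(\G)$ precisely when the supremum is finite.

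There is no real obstacle here; the only small care needed is to phrase the argument so that it covers both the bounded and unbounded cases uniformly (i.e., $P_{\psi}$ is initially only defined on $\O(\G)$, and we want the equality of extended real numbers). This is handled automatically by working with the orthogonal block decomposition above, since the norm of a diagonal operator on a Hilbert direct sum is exactly the supremum of the block norms.
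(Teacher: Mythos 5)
Your proposal is correct and follows essentially the same route as the paper: both compute $P_{\psi}(u^{\alpha}_{ij}) = \psi(\alpha)u^{\alpha}_{ij}$ from the coproduct formula and centrality, observe that $P_{\psi}$ is therefore diagonal in the orthogonal Peter--Weyl basis of $L^{2}(\G)$, and conclude that the operator norm is the supremum of the eigenvalues in absolute value. You merely spell out the isotypic block decomposition and the treatment of the possibly unbounded case a bit more explicitly than the paper does.
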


\begin{proof}
By Woronowicz' Peter-Weyl theorem, the elements $u^{\alpha}_{ij}$ form an orthogonal basis of $L^{2}(\G)$. Moreover, a straightforward calculation yields
\begin{equation*}
P_{\psi}(u^{\alpha}_{ij}) = \psi(\alpha)u^{\alpha}_{ij}
\end{equation*}
so that $P_{\psi}$ is diagonal in this basis and the result follows.
\end{proof}

Since $P_{h}$ is the projection onto the linear span of $1$, the above Lemma means that the distance in operator norm is exactly given by the spectral gap of the operator $P_{\varphi}$. In this setting it is not very difficult to prove that there is a cut-off phenomenon.

\begin{prop}
The random walk associated to $\varphi_{\text{rt}}$ has a cut-off in the $L^{2}$-operator norm at $k = N/2$ steps.
\end{prop}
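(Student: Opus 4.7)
The plan is to apply the preceding lemma, which reduces the problem to controlling a single scalar quantity: since $\varphi_{\text{rt}}$ is central, $\varphi_{\text{rt}}^{\ast k}(n) = \varphi_{\text{rt}}(n)^{k}$, so
\begin{equation*}
\|P_{\varphi_{\text{rt}}^{\ast k}} - P_{h}\|_{B(L^{2}(S_{N}^{+}))} = \sup_{n\geqslant 1}\vert\varphi_{\text{rt}}(n)\vert^{k}.
\end{equation*}
(The subtraction of $P_h$ removes the trivial representation contribution.) A direct evaluation gives $\varphi_{\text{rt}}(1) = \frac{N-1}{N}\cdot\frac{N-3}{N-1} + \frac{1}{N} = 1 - 2/N$, so the cut-off statement will follow once I show that the supremum above is attained at $n=1$ and equals $1 - 2/N$.

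The main step is therefore to prove that for $N$ large enough, $r_{n} := \varphi_{N-2}(n) = v_{n}(N-2)/v_{n}(N)$ is positive and strictly decreasing in $n$, since then $\varphi_{\text{rt}}(n) = \frac{N-1}{N}r_{n} + \frac{1}{N}$ satisfies $1/N \leqslant \varphi_{\text{rt}}(n)\leqslant \varphi_{\text{rt}}(1) = 1-2/N$ for all $n\geqslant 1$, and in particular $\vert\varphi_{\text{rt}}(n)\vert\leqslant 1-2/N$. Positivity of $r_{n}$ follows from Lemma \ref{lem:encadrement} applied to $v_{n}(t) = u_{2n}(\sqrt{t})$ as soon as $\sqrt{N-2}\geqslant 2$, i.e.\ $N\geqslant 6$. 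For the monotonicity, I would set up the Casoratian-like quantity
\begin{equation*}
D_{n} = v_{n}(N-2)v_{n-1}(N) - v_{n-1}(N-2)v_{n}(N)
\end{equation*}
whose sign is that of $r_{n} - r_{n-1}$ (after dividing by the positive quantity $v_{n}(N)v_{n-1}(N)$). Using the recursion $v_{n+1}(X) = (X-2)v_{n}(X) - v_{n-1}(X)$ obtained from \eqref{eq:recursionsymetric}, a short computation gives the telescoping identity $D_{n+1} = D_{n} - 2v_{n}(N-2)v_{n}(N)$, with $D_{1} = -2$. Hence
\begin{equation*}
D_{n} = -2\sum_{k=0}^{n-1}v_{k}(N-2)v_{k}(N) < 0,
\end{equation*}
which yields the claimed monotonicity. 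I expect this identification of the right algebraic quantity to be the only mildly non-routine step; everything else is calculation.

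Once the equality $\|P_{\varphi_{\text{rt}}^{\ast k}} - P_{h}\|_{B(L^{2})} = (1-2/N)^{k}$ is established, the cut-off statement is immediate. Writing $k = N/2 + cN$ with $c\in\R$, the inequality $\ln(1-2/N) \leqslant -2/N$ gives the upper bound $(1-2/N)^{k}\leqslant e^{-(1+2c)}$ for $c > 0$, which tends to $0$ exponentially fast in $c$; for $c < 0$, a Taylor expansion of $\ln(1-2/N) = -2/N + O(1/N^{2})$ yields $(1-2/N)^{k} \geqslant e^{-(1+2c)}(1-O(c/N))$, which stays bounded away from $0$ (and close to $1$ for $c$ very negative). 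This is precisely the $L^{2}$-operator-norm cut-off at $k = N/2$ with window of size $O(N)$.
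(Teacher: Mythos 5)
Your proof is correct and reaches the same formula $\|P_{\varphi_{\text{rt}}^{\ast k}} - P_{h}\|_{B(L^{2})} = (1-2/N)^{k}$ as the paper, but the key monotonicity step is handled by a genuinely different argument. The paper works with the $O_N^+$ polynomials $u_n$, introduces the ratio $a_n(t) = u_{n+1}(t)/u_n(t)$ satisfying $a_{n+1}(t) = t - 1/a_n(t)$, asserts the two-sided bound $t-1/t \leqslant a_n(t) \leqslant t$, and deduces $a_n(\sqrt{N-2})/a_n(\sqrt{N}) \leqslant \sqrt{N(N-2)}/(N-1) < 1$. You instead work directly with the $v_n$ recursion $v_{n+1} = (X-2)v_n - v_{n-1}$ and form the Casoratian $D_n = v_n(N-2)v_{n-1}(N) - v_{n-1}(N-2)v_n(N)$; the telescoping identity $D_{n+1} = D_n - 2v_n(N-2)v_n(N)$ together with $D_1 = -2$ gives $D_n = -2\sum_{k=0}^{n-1}v_k(N-2)v_k(N) < 0$, which is exactly the claimed monotonicity of $r_n = v_n(N-2)/v_n(N)$. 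This is arguably cleaner and more robust: the Casoratian identity is exact and needs no control on the ratios $a_n$. In fact the paper's intermediate claim $a_n(t) \geqslant t - 1/t$ is not literally correct for $n\geqslant 2$ (the sequence $a_n(t)$ decreases from $a_0 = t$ past $a_1 = t-1/t$ towards the fixed point $q(t)^{-1} < t - 1/t$); the paper's conclusion can be salvaged by noting that $a_n(t)$ is increasing in $t$ for each fixed $n$, but your identity sidesteps the issue entirely. For the final cut-off estimates you give the same elementary computation as the paper for the upper bound; for the lower bound your big-$O$ statement is a bit looser than the paper's explicit $(1-2/N)^{N/2}\geqslant e^{-2}$ (for $N\geqslant 5$, via Lemma \ref{lem:lowerboundquantumrotation}) and resulting bound $e^{2c-2}$ at $k = N/2 - cN$, but the substance is identical.
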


\begin{proof}
We first show that the supremum of
\begin{equation*}
\left(\frac{N-1}{N}\frac{v_{n}(N-2)}{v_{n}(N)} + \frac{1}{N}\right)^{k}
\end{equation*}
is attained at $n = 1$. Let us set, for $n\geqslant 1$, $a_{n}(t) = u_{n+1}(t)/u_{n}(t)$. The recursion relation \eqref{eq:recursionsymetric} implies
\begin{equation*}
a_{n+1}(t) = t - \frac{1}{a_{n}(t)}
\end{equation*}
from which it follows by induction that for all $n$, $t-1/t \leqslant a_{n}(t)\leqslant t$. Using this, we see that
\begin{equation*}
\frac{u_{n+1}(\sqrt{N-2})}{u_{n+1}(\sqrt{N})}\frac{u_{n}(\sqrt{N})}{u_{n}(\sqrt{N-2})} = \frac{a_{n}(\sqrt{N-2})}{a_{n}(\sqrt{N})} \leqslant \frac{\sqrt{N(N-2)}}{N-1} < 1.
\end{equation*}
Thus, the sequence $u_{n}(\sqrt{N-2})/u_{n}(\sqrt{N})$ is decreasing and the claim is proved. As a consequence,
\begin{equation*}
\|P_{\varphi_{\text{rt}}^{\ast k}} - P_{h}\|_{B(L^{2}(S_{N}^{+}))} = \left(\frac{N-1}{N}\frac{N-3}{N-1} + \frac{1}{N}1\right)^{k} = \left(1 - \frac{2}{N}\right)^{k}.
\end{equation*}
Because $1-x\leqslant e^{-x}$, for any $c > 0$
\begin{equation*}
\|P_{\varphi_{\text{rt}}}^{\ast (N/2+cN)} - P_{h}\|_{B(L^{2}(S_{N}^{+}))}\leqslant e^{-1}e^{-2c},
\end{equation*}
yielding the upper bound.

As for the lower bound, using an estimate already mentioned in Lemma \ref{lem:lowerboundquantumrotation} we have for $N\geqslant 5$
\begin{align*}
\left(1-\frac{2}{N}\right)^{N/2}\geqslant \left(\frac{1}{5N}\right)^{1/\ln(N)} = e^{-1-\ln(5)/\ln(N)} \geqslant e^{-2}
\end{align*}
so that for $c<1$,
\begin{equation*}
\|P_{\varphi_{\text{rt}}}^{\ast (N/2-cN)} - P_{h}\|_{B(L^{2}(S_{N}^{+}))}\geqslant e^{2c-2}\geqslant e^{-2}(1-e^{-2c}).
\end{equation*}
\end{proof}

The cut-off in total variation distance for the classical random walk associated to $\mu_{\text{rt}}$ occurs at $N\ln(N)/2$ steps (see \cite{diaconis1981generating}) and was one of the first important results of the theory. Since we considered a weaker norm, we get a better cut-off parameter. However, there are other norms available for operators on a von Neumann algebra which may be closer to the total variation distance and therefore yield a different cut-off parameter. In particular, since transition operators are completely positive, it would be interesting to have estimates for the \emph{completely bounded norm} of $P_{\varphi_{\text{rt}}^{\ast k}} - P_{h}$.

\subsection{Quantum automorphisms of matrices}\label{subsec:quantumautomorphisms}

As mentioned in the end of Subsection \ref{subsec:randomrotations}, apart from $O_{N}^{+}$ there is another quantum generalization of $SO(N)$, called the \emph{quantum automorphism group of $(M_{N}(\C), \mathrm{tr})$}. This means that it is a universal object in the category of compact quantum groups acting on $M_{N}(\C)$ in a trace-preserving way. For $N=2$, this is known to be isomorphic to $SO(3)$.

It was shown in \cite{banica1999symmetries} that the representation theory of this quantum group is the same as $S_{N}^{+}$. The only difference is that the dimensions are given by $u_{2n}(N) = v_{n}(N^{2})$. We can therefore consider the pure states $\varphi_{t}$ as before for $0 \leqslant t < N^{2}$ and the same arguments as in Theorem \ref{thm:randomtranspositions} would show that the random walk associated to random rotations with a fixed angle $\theta$ has a cut-off at $N\ln(N)/2(1-\cos(\theta))$ steps. There is however a quicker way to this. Consider the subalgebra of $\O(O_{N}^{+})$ generated by all products $u_{ij}u_{kl}$ of two generators. Then, this is isomorphic to the Hopf algebra of the quantum automorphism group of $(M_{N}(\C), \mathrm{tr})$. The random walk can therefore be obtained by simply restricting the state to this subalgebra and as far as Lemma \ref{lem:upperbound} is concerned this is just restricting to the sum of even terms. The upper bound for the cut-off then trivially follows from Theorem \ref{thm:randomrotation}. As for the lower bound, it is a computation similar to that of Proposition \ref{prop:lowerbound} using $\chi_{2}$ instead of $\chi_{1}$.

\bibliographystyle{amsplain}
\bibliography{../../quantum}

\end{document}